\newcommand{\la}{\lambda}
\newtheorem*{rep@theorem}{\rep@title}
\newcommand{\newreptheorem}[2]{%
\newenvironment{rep#1}[1]{%
\def\rep@title{#2 \ref{##1}}%
\begin{rep@theorem}}%
{\end{rep@theorem}}}
\newtheorem{theorem}{Theorem}
\newtheorem{corollary}[theorem]{Corollary}
\newtheorem{lemma}[theorem]{Lemma}
\newtheorem{proposition}{Proposition}
\newtheorem*{remark}{Remark}
\def\bs{\bigskip}\def\ms{\medskip}
\def\N{{\mathbb N}}\def\Z{{\mathbb Z}}\def\R{{\mathbb R}}
\def\eps{\varepsilon}\def\phi{\varphi}
\def\({\mbox{$($}}\def\){\mbox{$)\/$}}
\newcommand\dyckpath[3]{
	\draw[help lines] (#1) grid +(#2,#2);
	\draw[dashed] (#1) -- +(#2,#2);
	\coordinate (prev) at (#1);
	\foreach \dir in {#3}{
		\ifnum\dir=0
		\coordinate (dep) at (1,0);
		\else
		\coordinate (dep) at (0,1);
		\fi
		\draw[line width=2pt,] (prev) -- ++(dep) coordinate (prev);
	};
}
\newcommand\labeldyck[5]{
	\coordinate (axis) at (0,0);
	\draw[help lines] (#1) grid +(#2,#3);
	\draw[dashed] (#1) -- +(#2,#3);
	\coordinate (prev) at (#1);
	\foreach \dir in {#4}{
		\ifnum\dir=0
		\coordinate (dep) at (1,0);
		\else
		\coordinate (dep) at (0,1);
		\fi
		\draw[line width=2pt] (prev) -- ++(dep) coordinate (prev);
	};
	
	\foreach \x/\y/\c in {#5}{
		\path (axis)+(\x+0.5,\y+0.5) node {$\c$};
	}
}
\title{The asymptotic normality of $(s,s+1)$-cores with distinct parts}
\author{
J\'anos Koml\'os
  \thanks{Department of Mathematics, Rutgers University}
\and Emily Sergel
    \thanks{esergel@upenn.edu. Department of Mathematics, University of Pennsylvania. Partially supported by NSF grant DMS-1603681.}
\and G\'abor Tusn\'ady
   \thanks{MTA R\'enyi Alfr\'ed Matematikai Kutat\'o Int\'ezet}
        }
\date{}
\begin{document}

\maketitle

\begin{abstract}
Simultaneous core partitions are important objects in algebraic combinatorics. Recently there has been interest in studying the distribution of sizes among all $(s,t)$-cores for coprime $s$ and $t$. Zaleski (2017) gave strong evidence that when we restrict our attention to $(s,s+1)$-cores with distinct parts, the resulting 
distribution is approximately normal. We prove his conjecture by applying the Combinatorial Central Limit Theorem and mixing the resulting normal distributions. 
\end{abstract}

\parindent=0pt


\section{Introduction}

\def\la{\lambda}

A partition of $n$ is a weakly decreasing sequence $\la = (\la_1 \geq \la_2 \geq \dots \geq \la_k >0)$ whose parts sum to $n$, i.e., $\la_1+\la_2+\dots+\la_k=n$. We say that $n$ is the \emph{size} of $\la$ and $k$ is its \emph{length}. For example, the partition $(4,3,3,3,2)$ has size 15 and length 5.

\ms
To each partition, we associate a diagram, known as a Ferrers diagram. The (french) Ferrers diagram of a partition $\la$ consists of boxes which are left-justified and whose $i$th row from the bottom contains $\la_i$ boxes. For example, see Figure \ref{fig:FerrersEx1}.

\begin{figure}[H]
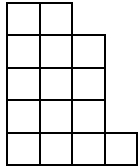

$$
\yng(2,3,3,3,4)
$$
\caption{The Ferrers diagram of the partition $(4,3,3,3,2)$.}
\label{fig:FerrersEx1}
\end{figure}

To each cell of a Ferrers diagram we associate a number known as the cell's \emph{hook length}. The hook length of a cell $c$ is the number of boxes strictly right of $c$ (known as the \emph{arm} of the cell) plus the number of boxes strictly above $c$ (the \emph{leg}) plus one. For example, the cell $c$ indicated in Figure \ref{fig:hook} has hook length 4. The cell marked $a$ is the only one in the arm and the two cells marked $\ell$ form the leg.

\begin{figure}[H]
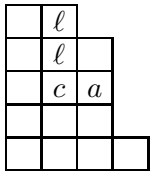

$$
\begin{Young}
& $\ell$ \cr
& $\ell$ & \cr
& $c$ & $a$ \cr
& & \cr
& & & \cr
\end{Young}
$$
\caption{The arm and leg of a cell of a Ferrers diagram.}
\label{fig:hook}
\end{figure}

For convenience, we will sometimes write the hook length of each cell into the Ferrers diagram. We say that a partition is an $s$-core if none of its cells have hook-length $s$. A partition is an $(s,t)$-core if it is simultaneously an $s$-core and a $t$-core. See Figure \ref{fig:FerrersEx}. The number of $(s,t)$-cores is finite if and only if $\gcd(s,t)=1$. Jaclyn Anderson \cite{anderson} gives a beautiful bijection between $(s,t)$-cores and certain lattice paths from $(0,0)$ to $(s,t)$ which proves this result and much more.

\begin{figure}[H]
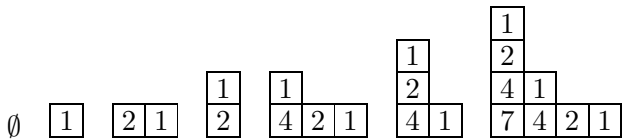

$$
\emptyset \quad \young(1) \quad \young(21) \quad \young(1,2) \quad \young(1,421) \quad \young(1,2,41) \quad \young(1,2,41,7421)
$$
\caption{The Ferrers diagram of every $(3,5)$-core with hook lengths indicated.}
\label{fig:FerrersEx}
\end{figure}

Simultaneous cores have numerous applications in algebraic combinatorics. 
For instance, Susanna Fishel and Monica Vazirani \cite{fv09,fv10} showed that when $t=ds\pm1$ for some $d \in \mathbb{N}$, they are naturally in bijection with certain regions of the $d$-Shi arrangement in type A. 
Drew Armstrong, Christopher Hanusa, and Brant Jones \cite{armstrong} extended this work to type C and related simultaneous cores to rational Catalan combinatorics. 
Purely enumerative questions have yielded deep connections as well. For instance, Armstrong \cite{armstrong} initially conjectured a simple formula for the average size of an $(s,t)$-core in 2011. Paul Johnson \cite{johnson} gave the first proof of Armstrong's conjecture by relating cores to polytopes.

\ms
As Shalosh B. Ekhad and Doron Zeilberger \cite{doron} note ``the average is just the first question one can ask about a probability distribution''. They determine the distribution obtained by fixing $t-s$, taking the size of a random $(s,t)$-core, normalizing, and letting $s \to \infty$. Surprisingly these distributions are not normal and are not known to be associated with any other combinatorial problems. However, Anthony Zaleski \cite{zaleski} gave strong experimental evidence that if $t=s+1$ and only cores with distinct parts are considered, then the resulting limit distribution is indeed normal.
We prove this in the following form.

\ms
For a positive integer $s$, let $X_s$ be the random variable given by the size of an $(s,s+1)$-core with distinct parts which is chosen uniformly at random. Let $\mu$ and $\sigma^2$ be the mean and variance of $X_s$.
Let $\Phi$ denote the standard normal distribution function.

\begin{theorem} \label{thm:main}
For all positive integers $s$,
\begin{equation} \label{eq:main}
\sup_{x\in\R}
\big|P(X_s \leq \mu+x\sigma) - \Phi(x)\big|
= O(1/\sqrt{s}).
\end{equation}
\end{theorem}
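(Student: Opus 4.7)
My strategy, following the roadmap sketched in the abstract, is to combine a combinatorial parametrization of $(s,s+1)$-cores with distinct parts, Hoeffding's Combinatorial Central Limit Theorem, and a mixing argument. The first step is to identify a bijection between $(s,s+1)$-cores with distinct parts and subsets $S$ of a fixed ground set (say, $\{1,\dots,s-1\}$) subject to a combinatorial restriction such as ``no two consecutive elements'' --- the natural candidate, since such subsets are counted by a Fibonacci number, which matches the known enumeration of these cores. Under this encoding, the size of the core should be a linear statistic of the indicator vector of $S$, i.e.\ a weighted sum $\sum_{i\in S} w_i$ with explicit weights $w_i$ that are smooth (polynomial) in $i$ and $s$.

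Next I would condition on $k=|S|$. The conditional law of $S$ given $|S|=k$ is uniform on non-consecutive $k$-subsets of $\{1,\dots,s-1\}$; the order-preserving encoding $\{i_1<\dots<i_k\}\mapsto\{i_1,i_2-1,\dots,i_k-(k-1)\}$ identifies this with a uniformly chosen $k$-subset of $\{1,\dots,s-k\}$. Hence the conditional size is a sum $\sum_j a_j^{(s,k)}$ drawn without replacement from a deterministic sequence of length $\Theta(s)$, which is precisely Hoeffding's setting. After verifying the usual third-moment hypothesis on the $w_i$, the Combinatorial CLT gives a Berry--Esseen-type bound of order $1/\sqrt{s}$ for the conditional distribution around a conditional mean $\mu(k)$ and variance $\sigma^2(k)$ that are explicit polynomial functions of $k$.

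The final step is mixing. The marginal law of $|S|$ on non-consecutive subsets is a classical Fibonacci-type distribution, itself asymptotically normal with mean and variance of order $s$, and concentrated on a window of width $O(\sqrt{s})$. On this typical window the functions $\mu(k)$ and $\sigma^2(k)$ vary smoothly, so the unconditional distribution of $X_s$ is a mixture of nearly-normal conditional distributions indexed by a nearly-normal parameter $|S|$. A Taylor expansion of $\mu(k)$ and $\sigma^2(k)$ around the mean of $|S|$, together with a uniform comparison of cumulative distribution functions, shows this mixture agrees with $\mathcal{N}(\mu,\sigma^2)$ up to $O(1/\sqrt{s})$, with tail contributions from atypical $k$ being exponentially small.

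The most delicate part, I expect, is the mixing step: one must track simultaneously the conditional Berry--Esseen error from the Combinatorial CLT, the normal-approximation error for $|S|$, and the smooth $k$-dependence of $(\mu(k),\sigma^2(k))$, and show that all three sources combine to a single uniform $O(1/\sqrt{s})$ bound on $\sup_x|P(X_s\le\mu+x\sigma)-\Phi(x)|$ rather than degrading to a worse rate. Secondary but nontrivial points are verifying the third-moment hypothesis of the Combinatorial CLT for the specific weights $w_i$ coming from the core-size formula, and establishing that the global $\mu$ and $\sigma^2$ really do match the mean and variance of the mixture (so no re-centering correction is needed in the statement).
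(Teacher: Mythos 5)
Your overall strategy coincides with the paper's: encode the cores combinatorially so that, conditional on the number of parts $k$, the size is a sum over a uniform random $k$-subset of $\{1,\dots,s-k\}$; apply the Combinatorial CLT with a Berry--Esseen rate for each fixed $k$ in the typical range; observe that the weight distribution $p_k=\binom{s-k}{k}/Fib_{s+1}$ is itself asymptotically normal (the paper gets this from the real-rootedness of its generating function, via Pitman) with exponentially small tails; and then mix. Your identification of the mixing step as the delicate part is also correct.

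However, the mixing step as you describe it ("Taylor expansion of $\mu(k),\sigma^2(k)$ plus a uniform comparison of CDFs") contains a genuine gap that the paper has to work around explicitly. The natural implementation of your plan replaces each weight $p_k$ by its local normal approximation $\phi\bigl((k-\mu(W))/\sigma(W)\bigr)/\sigma(W)$; but the local approximation error is of order $1/\sigma^2(W)=O(1/s)$ \emph{per term}, and this bound does not localize to the typical window, so summing over all $\Theta(s)$ values of $k$ accumulates an $O(1)$ total error --- destroying the $O(1/\sqrt{s})$ rate. The paper avoids this by an Abel summation-by-parts argument (Lemma \ref{lemma:summation-by-parts} and Corollary \ref{corollary:distance-estimate}) that lets it use the \emph{global} (CDF-level) approximation \eqref{eq:global}, whose error $O(1/\sigma(W))=O(1/\sqrt{s})$ enters only once. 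A second point you defer but which is essential to the statement as posed: after the mixture is reduced to $\int_{-\infty}^\infty\Phi(ax+bt)\,\phi(t)\,dt$ with $a=\sqrt{8/5}$, $b=-\sqrt{3/5}$, one needs the identity $\int\Phi(ax+bt)\,\phi(t)\,dt=\Phi\bigl(ax/\sqrt{1+b^2}\bigr)$ and the numerical check $a^2/(1+b^2)=1$; this is precisely what certifies that the stated $\mu$ and $\sigma$ require no re-centering or re-scaling. Without these two ingredients your outline does not yet yield the claimed uniform $O(1/\sqrt{s})$ bound.
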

Here, and throughout the paper, the implied constants in error bounds $O(.)$ are universal constants not depending on any of our parameters.
That is, Theorem \ref{thm:main} says: There is a universal constant $C_1$ such that,
for all $s$ and $x$,
\begin{equation*}
\big|P(X_s \leq \mu+x\sigma) - \Phi(x)\big|
\leq C_1/\sqrt{s}.
\end{equation*}

\ms

To prove this we introduce a new tool to this discussion: the Combinatorial Central Limit Theorem (CCLT). The original form of the CCLT is due to Wassily Hoeffding \cite{hoeffding}, but we will use the tail bounds given by Erwin Bolthausen \cite{bolt}.

\ms
Our main tools are two classical results:
Proposition \ref{proposition:matrix} on page
\pageref{proposition:matrix} (CCLT)
and Proposition \ref{proposition:pitman} on page
\pageref{proposition:pitman}
(about generating functions with only real roots).
(These two existing tools are named Propositions
and they are numbered separately.
All other statements (theorem, corollary, lemma)
are labeled in one single sequence.)

\bs
The rest of the paper is organized as follows. In Section \ref{sec:CCLT}, we review the Combinatorial Central Limit Theorem. In Section \ref{sec:fixed-k}, we prove that the distribution of size among $(s,s+1)$-cores with distinct parts is
{\em already approximately normal when the number of parts is fixed.} In Section \ref{sec:weights}, we recall that the weights needed to mix these distributions together are also approximately normal.
In Section \ref{sec:mixing} we mix these distributions together to prove Theorem \ref{thm:main}.
Section \ref{sec:lemmas} contains the proofs of some technical lemmas used in Section \ref{sec:mixing}.

\section{The Combinatorial Central Limit Theorem}\label{sec:CCLT}

\bs
Let $A=(a_{ij})$ be an $m\times m$ matrix of real numbers.
We are interested in the random sum
$$S_A = \sum_i a_{i\pi(i)}
$$
where $\pi\in S_m$ is a random permutation
of $\{1,2,\dots,m\}$ chosen uniformly from among all $m!$ permutations.
Following \cite{bolt} we write
$$
a_{i\,\cdot} = \frac1m\, \sum_j a_{ij},
  \quad a_{\cdot j} = \frac1m\, \sum_i a_{ij},
  \quad\mbox{and}\quad  a_{\cdot\cdot} = \frac1{m^2}\, \sum_{i,j} a_{ij}
$$
and set
$$
\dot{a}_{ij} = a_{ij}-a_{i\,\cdot}-a_{\cdot j}+a_{\cdot\cdot}
$$
to normalize the row- and column-sums of our matrix to 0. Furthermore, we write
$$
\mu_A = ma_{\cdot\cdot}
  \quad\mbox{and}\quad
  \sigma_A^2 = \frac1{m-1}\,\sum_{i,j}\dot{a}_{ij}^2
$$
for the mean and variance of $S_A$,
and consider the normalized sum

$$T_A = \frac{S_A - \mu_A}{\sigma_A}
= \sum_i \widehat{a}_{i\pi(i)}
$$
where
$$\widehat{a}_{ij} = \dot{a}_{ij}/\sigma_A$$

\bs\ms
The following theorem of Bolthausen \cite{bolt} gives an estimate of the remainder in the Combinatorial Central Limit Theorem.
When $A$ is of rank 1, this gives a tail bound for the classical result of Abraham Wald and Jacob Wolfowitz \cite{WW}.

\begin{proposition}\label{proposition:matrix}
There is an absolute constant $K$ such that
for all $A$ with $\sigma_A^2>0$,

$$
\sup_t |P(T_A\leq t)-\Phi(t)|
\,\leq\,
K\sum_{i,j} |\widehat{a}_{ij}|^3/m\,.
$$

\end{proposition}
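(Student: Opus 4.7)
The plan is to prove this via Stein's method with exchangeable pairs, following the blueprint of Bolthausen. By the definitions we may work directly with the normalized sum $T_A$, whose summands $\widehat a_{ij}$ satisfy $\sum_i \widehat a_{ij}=\sum_j \widehat a_{ij}=0$ and $\sum_{i,j}\widehat a_{ij}^2=m-1$, so that $E[T_A]=0$ and $\mathrm{Var}(T_A)=1$. Write $\beta_A:=\sup_t|P(T_A\le t)-\Phi(t)|$; the goal is to show $\beta_A=O\bigl(\sum_{i,j}|\widehat a_{ij}|^3/m\bigr)$.

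First, I would construct an exchangeable pair. Let $(I,J)$ be uniform on $\{1,\dots,m\}^2$ and independent of $\pi$, let $\tau_{IJ}$ denote the (possibly trivial) transposition swapping $I$ and $J$, set $\pi'=\pi\circ\tau_{IJ}$, and put $T_A'=\sum_i\widehat a_{i\pi'(i)}$. Then $(T_A,T_A')$ is exchangeable, and
\[
T_A'-T_A=\widehat a_{I\pi(J)}+\widehat a_{J\pi(I)}-\widehat a_{I\pi(I)}-\widehat a_{J\pi(J)}.
\]
Using the vanishing row and column sums of $(\widehat a_{ij})$, a direct computation gives the linear regression identity
\[
E[T_A'-T_A\mid\pi]=-\tfrac{2}{m}\,T_A,
\]
so the pair fits Stein's exchangeable-pair framework with coupling $\lambda=2/m$.

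Second, I would apply the standard consequence $E[T_A f(T_A)]=\tfrac{m}{4}\,E[(T_A'-T_A)(f(T_A')-f(T_A))]$ of the regression identity and exchangeability to the Stein solution $f_t$ of $f'(w)-wf(w)=\mathbf 1_{(-\infty,t]}(w)-\Phi(t)$. A first-order Taylor expansion then produces the decomposition
\[
P(T_A\le t)-\Phi(t)=E\!\left[f_t'(T_A)\!\left(1-\tfrac{m}{4}(T_A'-T_A)^2\right)\right]+R,
\]
where $|R|\le C\,E|T_A'-T_A|^3$ and the bracketed ``smooth'' term is controlled by the $L^2$-norm of $\tfrac{m}{4}\,E[(T_A'-T_A)^2\mid\pi]-1$ (whose unconditional mean vanishes, since $E[(T_A'-T_A)^2]=2\lambda=4/m$). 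Because each of the four summands making up $T_A'-T_A$ is a single matrix entry, both quantities reduce via Cauchy--Schwarz to the target sum $\sum_{i,j}|\widehat a_{ij}|^3/m$.

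The main obstacle is the final step: because $f_t$ is only Lipschitz, its derivative jumps at $t$, and a naive Taylor expansion loses a power. Here I would follow Bolthausen's device of combining (i) an anti-concentration estimate of the form $P(a\le T_A\le b)\le C(b-a)+\varepsilon$ obtained from the same exchangeable pair, with (ii) an inductive bootstrap yielding an inequality of the shape $\beta_A\le C_1\sum_{i,j}|\widehat a_{ij}|^3/m+C_2\sqrt{\sum_{i,j}|\widehat a_{ij}|^3/m}\cdot\beta_A^{1/2}$, closed algebraically using the trivial bound $\beta_A\le 1$ in the regime where the third-moment sum is not small. This bootstrap, rather than the smooth Stein calculation, is the delicate technical heart of the argument.
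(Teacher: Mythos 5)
The paper does not prove this proposition at all: it is quoted as Bolthausen's 1984 estimate of the remainder in the combinatorial CLT, one of the two external tools (the ``Propositions'') the authors explicitly rely on without proof. So there is no internal argument to compare yours against; the comparison has to be with the literature.

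Your outline correctly describes one known route to Bolthausen's bound --- Stein's method with the random-transposition exchangeable pair plus a concentration inequality, essentially the later argument of Chen and Fang rather than Bolthausen's original induction on $m$. The identities you state are right: the regression identity $E[T_A'-T_A\mid\pi]=-(2/m)T_A$ does follow from the vanishing row and column sums of $(\widehat{a}_{ij})$, so $\lambda=2/m$ and $E[(T_A'-T_A)^2]=4/m$. But as a proof the proposal has a genuine gap exactly where you locate ``the delicate technical heart'': the anti-concentration estimate $P(a\le T_A\le b)\le C(b-a)+\varepsilon$ with $\varepsilon$ controlled by $\gamma:=\sum_{i,j}|\widehat{a}_{ij}|^3/m$, and the bound on $\mathrm{Var}\bigl(E[(T_A'-T_A)^2\mid\pi]\bigr)$ by $\gamma^2$, are each substantial multi-page arguments; the latter is not a one-line Cauchy--Schwarz, since it requires converting fourth-moment expressions into third moments using $\sum_{i,j}\widehat{a}_{ij}^2=m-1$ and H\"older-type manipulations. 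Closing the resulting inequality $\beta_A\le C_1\gamma+C_2\sqrt{\gamma\,\beta_A}$ is fine as architecture, but the two asserted estimates are the entire content of the theorem, so what you have is a correct roadmap rather than a proof. For the purposes of this paper the right move is simply to cite Bolthausen, as the authors do.
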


\section{Normality for a fixed number of parts} \label{sec:fixed-k}

Armin Straub \cite{straub} gave the following beautiful characterization of our chosen objects:
{\em
A partition $\lambda$ into distinct parts is an $(s,s+1)$-core if and only if it has perimeter $\ell(\lambda)+\lambda_1 - 1 \leq s-1$.
}

\bs
Let $k$ and $s$ be fixed non-negative integers. By the above characterization, a partition $\lambda$ consisting of $k$ distinct parts is an $(s,s+1)$-core if and only if the largest part $\lambda_1$ is at most $s-k$. We naturally associate to each such partition a vector of length $s-k$ by recording a $1$ at position $\lambda_i$ for $1 \leq i \leq k$ and $0$ elsewhere. For example, the vector $(0,1,1,0,1,0)$ corresponds to the $(9,10)$-core $(5,3,2)$.

\ms
It is now easy to see that the number of $(s,s+1)$-cores with $k$ distinct parts is just ${s-k \choose k}$. Summing shows that the total number of $(s,s+1)$-cores with any number of distinct parts is the Fibonacci number $Fib_{s+1}$. This fact was originally conjectured by Tewodros Amdeberhan \cite{teddy} and proved by Straub \cite{straub}.

\ms
We can also see that the size of the initial core is just the sum of the positions of $1$'s in the resulting vector, i.e., the inner product of this vector and $(1,2,3,\dots,s-k)$. With this rephrasing we are able to apply the CCLT: simply take the matrix $A$ to be the outer product of the vector $(1^k,0^{s-2k})$ and the vector $(1,2,3,\dots,s-k)$.

\ms
In general, suppose $A=(a_{ij})$ is an $m\times m$ rank 1 matrix, i.e., $a_{ij}=\alpha_i x_j$ for some vectors $\alpha$, $x$. 
Thus,
writing $\bar{\alpha}=(\sum \alpha_i)/m$
and $\bar{x}=(\sum x_j)/m$,
we have
\begin{equation}\label{rank-1}
\begin{split}
\dot{a}_{ij}
&=
(\alpha_i-\bar{\alpha})(x_j-\bar{x})\,,
\quad
\mu_A = m \bar{\alpha}\bar{x}  \\
\sigma_A^2
&=
\frac1{m-1}\,\sum_{i,j}\dot{a}_{ij}^2
\,=\,\frac{m^2}{m-1}\,\left(\frac1m\sum_i(\alpha_i-\bar{\alpha})^2\right)
\left(\frac1m\sum_j(x_j-\bar{x})^2\right)
\end{split}
\end{equation}

\noindent 
Let $\alpha_1=\dots=\alpha_k=1$, \,$\alpha_{k+1}=\dots=\alpha_m=0$.
Note that now $S_A$ is the sum of the elements in a random $k$-subset of the list $x_1,\dots,x_m$.
We are interested in the special case $x_i=i$ for $i=1,\dots,m$.

\begin{theorem}\label{thm:fixed-k}
For this choice of parameters
the following explicit bound holds:

\begin{equation} \label{eq:m}
\sup_{x \in \mathbb{R}} |P(T_A\leq x)-\Phi(x)|
\,\leq\,
\left(\frac{12m^2}{k(m-k)}\right)^{3/2}
\cdot\frac{K}{\sqrt{m}}
\end{equation}
which goes to 0 when both
$km^{-2/3}\to\infty$ and $(m-k)m^{-2/3}\to\infty$.

\end{theorem}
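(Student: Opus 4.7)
The plan is to apply Proposition~\ref{proposition:matrix} directly to the rank-1 matrix $A$ with entries $a_{ij}=\alpha_i x_j$. Since $A$ has rank $1$, the simplifications in \eqref{rank-1} reduce everything to one-dimensional moment computations on the vectors $\alpha$ and $x$. There is no conceptual obstacle here; the only question is whether the arithmetic lands on the stated shape.

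First, I would use \eqref{rank-1} to evaluate $\sigma_A^2$ in closed form. With $\bar\alpha=k/m$ and $\bar x=(m+1)/2$, the two second moments are
\[
\frac1m\sum_i(\alpha_i-\bar\alpha)^2 \;=\; \frac{k(m-k)}{m^2}, \qquad \frac1m\sum_j(x_j-\bar x)^2 \;=\; \frac{m^2-1}{12},
\]
which combine to give $\sigma_A^2 = k(m-k)(m+1)/12$. Second, I would bound the cubic sum in Proposition~\ref{proposition:matrix}. Rank $1$ makes it factor as $\sum_{i,j}|\dot a_{ij}|^3 = \bigl(\sum_i|\alpha_i-\bar\alpha|^3\bigr)\bigl(\sum_j|x_j-\bar x|^3\bigr)$, so each factor is handled separately. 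The key estimate is the crude inequality $|u|^3 \leq (\max|u|)\,u^2$: since $|\alpha_i-\bar\alpha|\leq 1$ and $|x_j-\bar x|\leq m/2$, each factor collapses to a scaled second moment, and the product can be compared directly with $\sigma_A^3$. After a short cancellation, one arrives at an overall bound of the shape $C\,K\,m^{1/2}/\sqrt{k(m-k)}$ for an absolute constant $C$; because $k(m-k)\leq m^2/4$, this expression is at most a constant multiple of $(12m^2/(k(m-k)))^{3/2}K/\sqrt m$, so the stated bound \eqref{eq:m} follows.

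For the asymptotic assertion, write $p=k/m$ so that the right-hand side of \eqref{eq:m} is proportional to $[p(1-p)]^{-3/2}m^{-1/2}$, which vanishes iff $p(1-p)\,m^{1/3}\to\infty$. A quick case split ($k\leq m/2$ versus $k>m/2$) shows that the larger of $p$ and $1-p$ is always bounded below by $1/2$, so the convergence reduces to $\min(k,m-k)\gg m^{2/3}$ --- exactly the two-sided condition $km^{-2/3}\to\infty$ and $(m-k)m^{-2/3}\to\infty$. The main ``obstacle'' is minor: one has to choose a third-moment bound loose enough (here $|u|^3\leq(\max|u|)\,u^2$) that the final expression matches the convenient shape of \eqref{eq:m}, rather than producing a slightly sharper but uglier bound.
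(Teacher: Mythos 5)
Your proposal is correct and follows essentially the same route as the paper: apply Proposition~\ref{proposition:matrix} to the rank-1 matrix, compute $\sigma_A^2=\tfrac{m+1}{12}k(m-k)$ from \eqref{rank-1}, and bound the cubic sum. The only difference is in that last estimate: the paper simply uses $|\dot a_{ij}|\le 1\cdot m$ to get $\sum_{i,j}|\widehat a_{ij}|^3/m\le m^4/\sigma_A^3$, which evaluates directly to the stated right-hand side, whereas you factor the rank-1 cubic sum and use $|u|^3\le(\max|u|)\,u^2$ to reach the sharper intermediate bound $O\bigl(K\sqrt m/\sqrt{k(m-k)}\bigr)$ and then weaken it back via $k(m-k)\le m^2/4$ (where your ``constant multiple'' does need the explicit check that $\sqrt3\le 4\cdot 12^{3/2}$, which of course holds). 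Your treatment of the asymptotic condition via $p(1-p)\asymp\min(p,1-p)$ matches the paper's claim and in fact gives the equivalence rather than just sufficiency.
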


\begin{proof}

It is easy to see that
\begin{equation} \label{eq:mu-sig}
\bar{\alpha}=k/m,\ \bar{x}=(m+1)/2\,,
\ \mu_A=\frac{m+1}{2}\cdot k\,,
\ \sigma_A^2=\frac{m+1}{12}\,\cdot k(m-k).
\end{equation}
Using $|\dot{a}_{ij}|=|\alpha_i-\bar{\alpha}| \cdot |x_j-\bar{x}| \leq 1 \cdot m = m$,
the right-hand side in Proposition
\ref{proposition:matrix} is
\begin{align*}
K\sum_{i,j} |\widehat{a}_{ij}|^3/m
\,\leq \,\frac{Km^4}{\sigma_A^3}
\,&<
\left(\frac{12m^2}{k(m-k)}\right)^{3/2}
\cdot\frac{K}{\sqrt{m}}
\end{align*}
which goes to $0$ if $km^{-2/3}\to\infty$ and $(m-k)m^{-2/3}\to\infty$.

\end{proof}

Plugging $m=s-k$ in to \eqref{eq:m} gives the following corollary.

\begin{corollary} \label{cor:s-k}
Let $X_{s,k}$ be the random variable given by the size of an $(s,s+1)$-core with $k$ distinct parts chosen uniformly at random. Let $\mu_k$ and $\sigma_k^2$ denote the mean and variance of $X_{s,k}$, respectively. Then for any $ 0 < k < s/2$, the normalized variable $(X_{s,k} - \mu_k)/\sigma_k$ satisfies the following.\begin{equation*}
\sup_{x \in \mathbb{R}} \left| P\left(\frac{ X_{s,k} - \mu_k}{\sigma_k} \leq x \right)  - \Phi(x) \right| \leq \frac{ 12^{3/2} K (s-k)^{5/2} }{( k (s-2k) )^{3/2} }
\end{equation*}
Hence the distribution of $(X_{s,k} - \mu_k)/\sigma_k$ tends to the standard normal distribution if $s \to \infty$ and both $ks^{-2/3}\to\infty$ and $(s-2k)s^{-2/3}\to\infty$.
\end{corollary}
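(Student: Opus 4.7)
The plan is to obtain Corollary \ref{cor:s-k} as an immediate consequence of Theorem \ref{thm:fixed-k} via the substitution $m = s-k$. First I would recall Straub's criterion quoted at the start of Section \ref{sec:fixed-k}: a partition with $k$ distinct parts is an $(s,s+1)$-core precisely when its largest part is at most $s-k$. Encoding such a partition by its indicator vector in $\{0,1\}^{s-k}$, its size equals the inner product of that vector with $(1,2,\ldots,s-k)$. Sampling uniformly among $(s,s+1)$-cores with $k$ distinct parts thus amounts to choosing a uniform random $k$-subset of $\{1,\ldots,s-k\}$ and summing its elements, which is exactly the specialization of the CCLT setup already analyzed in Theorem \ref{thm:fixed-k} with $m=s-k$.

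Next I would plug $m = s-k$ into the explicit bound \eqref{eq:m}. Using $m-k = s-2k$, the expression $(12m^2/(k(m-k)))^{3/2} \cdot K/\sqrt{m}$ simplifies to
\[
\frac{12^{3/2} K\,(s-k)^{5/2}}{\bigl(k(s-2k)\bigr)^{3/2}},
\]
which matches the bound claimed in the corollary; the exponent $5/2$ on $(s-k)$ arises by combining the $(s-k)^3$ numerator from $(m^2)^{3/2}$ with the $(s-k)^{-1/2}$ factor from $1/\sqrt{m}$. The hypothesis $0 < k < s/2$ is what guarantees $m-k = s-2k > 0$, so that by \eqref{eq:mu-sig} we have $\sigma_k^2 = (m+1)k(m-k)/12 > 0$ and the right-hand side is well-defined.

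For the limiting statement, I would translate the two asymptotic conditions of Theorem \ref{thm:fixed-k}. Since $m = s-k \leq s$, we have $(s-k)^{-2/3} \geq s^{-2/3}$, so $km^{-2/3} \geq ks^{-2/3}$ and $(m-k)m^{-2/3} \geq (s-2k)s^{-2/3}$; hence the hypotheses $ks^{-2/3} \to \infty$ and $(s-2k)s^{-2/3} \to \infty$ are strong enough to invoke Theorem \ref{thm:fixed-k} and conclude that the error bound vanishes.

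I do not expect any genuine obstacle: the corollary is essentially a change of variables and a bookkeeping exercise. The only places to exercise care are the algebraic simplification of the constant and verifying that the variance is strictly positive under the stated range of $k$, both of which are routine.
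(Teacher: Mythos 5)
Your proposal is correct and is exactly the paper's argument: the paper derives the corollary by the single step of plugging $m=s-k$ into the bound \eqref{eq:m} of Theorem \ref{thm:fixed-k}, and your algebraic simplification of the constant and your translation of the two asymptotic conditions (using $m=s-k\leq s$) are both right. The extra verification that the uniform random core with $k$ distinct parts corresponds to a uniform random $k$-subset of $\{1,\dots,s-k\}$ is already set up in the paper just before Theorem \ref{thm:fixed-k}, so nothing further is needed.
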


We will use Corollary \ref{cor:s-k} only when $s/4 \leq k \leq s/3$, in which case we obtain the bound
\begin{equation} \label{eq:useful}
\sup_{x \in \mathbb{R}} \left|
 P(X_{s,k} \leq \mu_k+x\sigma_k) - \Phi(x) \right|  < \frac{1000 K}{\sqrt{s}}.
\end{equation}

\begin{remark}
Zaleski \cite{zaleski} already noted that the generating function for $(s,s+1)$-cores with $k$ distinct parts is none other than the shifted $q$-binomial coefficient $q^{k+1 \choose 2} {s-k \choose k}_q$. It was this observation that lead us to study the distribution when $k$ is fixed. By taking $s=n+m$ and $k=m$, Corollary \ref{cor:s-k} shows that the partial sums of coefficients in the $q$-binomial coefficient ${n \choose m}_q$ are approximately normally distributed. It would be interesting to see that the distribution is also locally approximately normal.
\end{remark}

\section{The distribution of the weights} \label{sec:weights}

Ultimately 
we will mix together the distributions of $X_{s,k}$ for all $k$ with $s$ fixed. Each distribution is weighted according to how many cores are being enumerated, namely $X_{s,k}$ gets weight
$$
p_k = P(W=k) = {s-k \choose k} / Fib_{s+1}.
$$
Here the random variable $W$ is the number of parts
in a random $(s,s+1)$-core with distinct parts.

\bs

The sequence ${s-k \choose k}$
appears often in combinatorics.
Its generating function
is
$$g_s(z) = \sum_{0\leq k\leq \frac{s}{2}} {s-k \choose k}z^k
= \frac{1}{\sqrt{1+4z}}
   \left(
   \left(\frac{1+\sqrt{1+4z}}{2}\right)^{s+1}
 - \left(\frac{1-\sqrt{1+4z}}{2}\right)^{s+1}
\right)
$$
--- see Concrete Mathematics \cite{concrete}
by Ronald Graham, Donald Knuth, and Oren Patashnik.
By differentiating it twice, we get the moments:
$$\mu(W) := \sum_k k \, p_k = \frac{5-\sqrt{5}}{10}\cdot s\,+\,O(1)$$
and
$$\sigma^2(W) := \frac{\sqrt{5}}{25}\cdot s \,+\,O(1).$$
For convenience we write
$c_0 = (5-\sqrt{5})/10 = 0.2764..$ and $k_0 = \lfloor c_0 s \rfloor$.

\bs
There is a long history of normal approximations for finite non-negative real sequences whose generating functions have only real roots.
The first appearance in combinatorics of a global normal law similar to \eqref{eq:global} is a result of Lawrence Harper \cite{harper} studying Stirling numbers. Harper's brilliant idea was further developed and generalized in the classical paper of Ed Bender \cite{bender}. Some important early results can be found in the paper  \cite{schoenberg} of Isaac  
Schoenberg.

\ms
The following proposition is from Pitman \cite{pitman}. It says that if a polynomial $f$
with non-negative coefficients has only real zeros,
then its coefficients
are approximately normally distributed, both globally and locally.
For completeness, we cite both the global and the local versions.

\begin{proposition}\label{proposition:pitman}
Let $p_0,p_1,\dots,p_n$ be a sequence of non-negative real numbers summing to 1 with mean $\mu$ and variance $\sigma^2$. 
Let $f(x) = \sum_k p_k x^k$ be its generating function. 
Write $S_k = \sum_{i=0}^k p_i$ for the partial sums.
Assume all roots of the polynomial $f$ are real. 
Then,
\begin{equation}\label{eq:global}
\max_{0\leq k\leq n} \left|S_k
-\Phi\left(\frac{k-\mu}{\sigma}\right)\right|
< \frac{0.7975}{\sigma}
\end{equation}

and there exists a universal constant $C$ such that
\begin{equation}\label{eq:local}
\max_{0\leq k \leq n} \left| \sigma p_k
-\phi\left(\frac{k-\mu}{\sigma}\right)\right|
< \frac{C}{\sigma}.
\end{equation}

\end{proposition}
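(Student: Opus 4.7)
The plan is to realize the distribution $(p_k)$ as the law of a sum of independent Bernoulli random variables and then apply classical estimates for such sums. Because $f(1)=\sum p_k=1$ and every coefficient of $f$ is non-negative, any positive $r$ satisfies $f(r)>0$, so all real roots of $f$ must be non-positive. Writing them as $-1/r_i$ with $r_i\geq 0$, one factors (after normalization)
\begin{equation*}
f(x) = \prod_{i=1}^n \frac{1+r_i x}{1+r_i}.
\end{equation*}
Consequently, if $B_1,\dots,B_n$ are independent Bernoullis with $P(B_i=1)=q_i:=r_i/(1+r_i)$, then $X=B_1+\cdots+B_n$ has law $(p_k)$, with $\mu=\sum_i q_i$ and $\sigma^2=\sum_i q_i(1-q_i)$.

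For the global bound \eqref{eq:global} I would directly invoke the Berry--Esseen theorem for non-identically distributed summands. Since $E|B_i-q_i|^3\leq q_i(1-q_i)$, the Lyapunov ratio satisfies $\sum_i E|B_i-q_i|^3/\sigma^3 \leq 1/\sigma$, so Berry--Esseen immediately yields $\sup_k|S_k-\Phi((k-\mu)/\sigma)|\leq C/\sigma$ for some universal $C$. Obtaining the explicit constant $0.7975$ would require a sharpened Berry--Esseen bound specialized to Bernoulli sums (for instance of Shevtsova type).

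The local bound \eqref{eq:local} is harder, and this is where I expect the main obstacle. My plan is to write the point probabilities by Fourier inversion,
\begin{equation*}
p_k = \frac{1}{2\pi}\int_{-\pi}^{\pi} e^{-ikt}\prod_{i=1}^n\bigl(1-q_i+q_i e^{it}\bigr)\,dt,
\end{equation*}
and to compare the integrand on a short neighborhood of $0$ with the Gaussian characteristic function $\exp(i\mu t-\sigma^2 t^2/2)$, whose inverse Fourier transform contributes the target $\sigma^{-1}\phi((k-\mu)/\sigma)$. Outside that neighborhood, the pointwise identity $|1-q_i+q_i e^{it}|^2=1-2q_i(1-q_i)(1-\cos t)$ combined with $1-u\leq e^{-u}$ gives $|\prod_i(1-q_i+q_i e^{it})|\leq\exp(-c\sigma^2(1-\cos t))$, which is Gaussianly small on $|t|\geq\sigma^{-1}\log\sigma$.

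The subtle point is that the desired error $C/\sigma$ must depend only on $\sigma$, not on $n$: one may have arbitrarily many $q_i$ extremely close to $0$ or $1$ that contribute negligibly to $\sigma^2$ while keeping the characteristic function close to $1$ over much of $[-\pi,\pi]$. Isolating and discarding such near-degenerate Bernoullis, and handling the separate contribution of the point $t=\pi$ when many $q_i$ are near $1/2$ (so that $p_k$ is nearly supported on one parity), is the delicate step; treating the remaining genuinely fluctuating summands with the standard local CLT machinery then produces the bound $C/\sigma$ uniformly in $k$.
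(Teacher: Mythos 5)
The paper does not prove this proposition: it is quoted from Pitman \cite{pitman} as one of the two external tools, and Pitman's argument is exactly your Bernoulli factorization, so your approach is the intended one. Two small corrections to your sketch: the constant $0.7975$ requires no Bernoulli-specific sharpening --- it is van Beek's universal Berry--Esseen constant, and since $\sum_i E|B_i-q_i|^3\le\sum_i q_i(1-q_i)=\sigma^2$ the Lyapunov ratio is at most $1/\sigma$, so \eqref{eq:global} follows directly. The obstacle you anticipate in the local bound also does not arise: your own identity $|1-q_i+q_ie^{it}|^2=1-2q_i(1-q_i)(1-\cos t)$ gives $\bigl|\prod_i(1-q_i+q_ie^{it})\bigr|\le\exp(-\sigma^2(1-\cos t))$ uniformly on $[-\pi,\pi]$, which is Gaussianly small in terms of $\sigma$ alone on all of $\{\delta\le|t|\le\pi\}$ including $t=\pi$; hence near-degenerate $q_i$ and parity effects are absorbed by this single estimate, and the standard comparison with $\exp(i\mu t-\sigma^2t^2/2)$ on $|t|\le\delta$ yields error $O(1/\sigma^2)$ for $p_k$, i.e.\ $O(1/\sigma)$ for $\sigma p_k$, as required by \eqref{eq:local}.
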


\ms
\begin{remark}
It is obvious that if $f$ has only real roots, then the non-negativity of the coefficients $p_0,\dots,p_n$ is equivalent to all roots of $f$ being non-positive
-- another traditional way of stating the result.
\end{remark}

\ms
Our generating function $g_s(x)$ has only real roots,
since only real numbers $z\leq -1/4$ can satisfy
$$
   \left|\, 1+\sqrt{1+4z}\,\right|
 = 
   \left|\, 1-\sqrt{1+4z}\,\right|.
$$
Hence Proposition \ref{proposition:pitman} applies to our sequence of weights $p_k={s-k \choose k}/Fib_{s+1}$ with $n=\lfloor s/2\rfloor$,
$\mu=\mu(W)$, and $\sigma=\sigma(W)$.

\bs
The same paper \cite{pitman} (Formula (11) on page 284) contains exponential tail bounds for our weight distribution (phrased in the more general setup of so-called PF-distributions).
Plugging in our specific parameter $\mu(W)=c_0s+O(1)$,
we get the following bound: for every $\eps>0$ there is a $ \delta>0$ and a constant $C(\eps)>0$ such that

\begin{equation} \label{tail}
  \sum_{k<(c_0-\eps)s}\,p_k
\ + \sum_{k>(c_0+\eps)s}\,p_k
< C(\eps) e^{- \delta s}
\end{equation}

We will use this tail probability estimate later
with $\eps=\min\{1/3-c_0,c_0-1/4\}= 0.026..$

\section{Proof of Theorem \ref{thm:main}}
\label{sec:mixing}

\ms
Fix a positive integer $s$. Recall that $X_s$ is the random variable given by the size of an $(s,s+1)$-core with distinct parts which is chosen uniformly at random.
Zaleski \cite{zaleski} shows that the mean and variance of $X_s$ are:
\begin{equation}  \label{eq:Zaleski-moments}
\mu = \mu(X_s) = \frac{1}{10}s^2 + O(s), \quad \quad \sigma^2 = \sigma^2(X_s) = \frac{2 \sqrt{5}}{375}s^3 + O(s^2).
\end{equation}

Recall also that if $0 \leq k \leq s/2$, then $X_{s,k}$ is the random variable given by the size of an $(s,s+1)$-core with $k$ distinct parts which is chosen uniformly at random.
Hence the distribution of $X_s$ is the mixture
of the distributions of the $\lfloor s/2\rfloor+1$ individual $X_{s,k}$.

\ms
Setting $m=s-k$ in \eqref{eq:mu-sig} gives
\begin{equation}  \label{eq:our-moments}
\mu_k = \frac{1}{2}\,k\,(s+1-k), \quad \sigma_k^2 = \frac{1}{12}\,k\,(s+1-k)(s-2k).
\end{equation}

\ms
\begin{remark}
Zaleski's formulas \eqref{eq:Zaleski-moments}
could be obtained by a lengthy computation
involving the generating function $g_s(z)$, \eqref{eq:our-moments}, and 
the Pythagorean Theorem of Probability Theory
(a.k.a. the Law of Total Variance):
\end{remark}

\vspace{-10pt}
$$
Var\big[\xi\big]
  = EVar\big[\xi|\eta\big] + 
    Var\big[E(\xi|\eta)\big].
$$

\bs
Fix $x \in \mathbb{R}$. 
Let
\begin{equation} \label{eq:the-sum}
F(x) :=P(X_s \leq  \mu + x \sigma)
= E P(X_{s,k} \leq  \mu + x \sigma).
\end{equation}

\ms
Here the expected value $E$ denotes the weighted sum
\begin{equation} \label{eq:goal}
E P(X_{s,k} \leq  \mu + x \sigma)
= \sum_{0 \leq k \leq s/2}
P(X_{s,k} \leq  \mu + x \sigma)\,p_k.
\end{equation}
For $0< k <s/2$ we can rewrite the terms
\begin{equation} 
P(X_{s,k} \leq  \mu + x \sigma)
= P(X_{s,k} \leq  \mu_k + y_k \sigma_k)
=: F_k(y_k),
\end{equation}
where
\begin{equation}\label{eq:y_k}
y_k = \frac{1}{\sigma_k}\big( (\mu-\mu_k) + x\sigma \big).
\end{equation}
For $k=0$ and $k=s/2$ (when $s$ is even)
we have $\sigma_k=0$, so $y_k$ is undefined.
These at most two terms of the right-hand side of \eqref{eq:goal} have weight $1/Fib_{s+1}$ (each), so we will only work with integers $k$ with $0<k<s/2$.

\bs
Our ultimate goal is to show that $F(x)$ is approximately $\Phi(x)$ with an error bound $O(1/\sqrt{s})$ uniformly for $x\in\mathbb{R}$. We will accomplish this with a sequence of approximations $Q_1,\dots,Q_7$ and several lemmas. Each subsequent $Q$ introduces an error of only $O(1/\sqrt{s})$.
The proofs of these lemmas will be put off to Section \ref{sec:lemmas}.

\bs
Let
$$
Q_1 := \sum_{0<k<s/2} p_k F_k(y_k).
\text{\quad Then,\ \ }|F(x)-Q_1|\leq2/Fib_{s+1}.
$$

\bs
Let $I=\Z\cap(s/4,s/3)$, $J=\Z\cap(0,s/2)-I$, and
\begin{equation} \label{def:Q2}
Q_2 := \sum_{0<k<s/2}\Phi(y_k)p_k.
\end{equation}
Note that by the CCLT \eqref{eq:useful},\begin{equation}
\big|P(X_{s,k} \leq  \mu_k + y \sigma_k) - \Phi(y)\big| = O(1/\sqrt{s})
\end{equation}
uniformly for $k\in I$ and $y\in\R$.
Hence,
\begin{equation}
\big|P(X_{s,k} \leq  \mu_k + y_k \sigma_k) - \Phi(y_k)\big| = O(1/\sqrt{s})
\end{equation}
uniformly for $k \in I$ and $x\in\R$.
On the other hand, for $k \in J$ the weights $p_k$ are exponentially small in $s$ by \eqref{tail}. Since both $P(X_{s,k}\leq \mu_k+y_k \sigma_k)$ and $\Phi(y_k)$ are between 0 and 1 and the weights $p_k$ are non-negative and sum to at most 1, we have
$$
|Q_1-Q_2| = \sum_{0<k<s/2}
 p_k\cdot \big|P(X_{s,k} \leq  \mu_k + y_k \sigma_k) - \Phi(y_k)\big| = O(1/\sqrt{s}).
$$

\ms
Now we must approximate $\Phi(y_k)$ and $p_k$. We start with approximating $y_k$.
For $k\in\Z$, write $y_k^* =ax+bt_k$ where $a= \sqrt{8/5}$, $b=-\sqrt{3/5}$, and $t_k = 5^{3/4} \,(k-k_0)/\sqrt{s}$. The next lemma says that $y_k$ 
is well approximated by the arithmetic progression $y_k^*=ax+bt_k$ in the relevant range of $k$.
We also write $dt_k=t_k-t_{k-1} = 5^{3/4}/\sqrt{s}$.
The quantity  $dt_k$ (which is independent of $k$) will be used as a mesh size in approximating integrals. We will also see \eqref{eq:sigma-is-constant} that $\sigma_k$ is roughly constant
when $k$ is close to $k_0$.

\ms
\begin{lemma} \label{lemma:y_k-*}
For all integers $k$ with $0<k<s/2$,
\begin{equation} \label{eq:y_k-*}
|y_k-y_k^*| = \frac1{\sqrt{s}}
 \cdot O(1+|xt_k|+t_k^2).
\end{equation}
\end{lemma}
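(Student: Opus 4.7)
The approach is to directly Taylor-expand the numerator and denominator of
$y_k = (x\sigma + (\mu - \mu_k))/\sigma_k$
around $k = k_0$. Setting $u = k - k_0$, so that $t_k = 5^{3/4} u/\sqrt{s}$, I would establish separately the two expansions
\begin{align*}
\sigma/\sigma_k &= a + O\bigl((|t_k|+1)/\sqrt{s}\bigr),\\
(\mu - \mu_k)/\sigma_k &= b\, t_k + O\bigl((t_k^2+1)/\sqrt{s}\bigr),
\end{align*}
whereupon \eqref{eq:y_k-*} follows on multiplying the first by $x$ and adding.

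First I would compute the polynomial expansions of $\mu_k = \tfrac12 k(s+1-k)$ and $\sigma_k^2 = \tfrac{1}{12} k(s+1-k)(s-2k)$ about $k_0$. The algebraic identities $c_0(1-c_0) = 1/5$ and $1-2c_0 = 1/\sqrt{5}$ that follow from $c_0 = (5-\sqrt{5})/10$ make all the leading coefficients transparent. Combined with Zaleski's formulas \eqref{eq:Zaleski-moments} and the values $\mu_{k_0} = s^2/10 + O(s)$, $\sigma_{k_0}^2 = \sqrt{5}\, s^3/300 + O(s^2)$ read off from the polynomials, one obtains $\mu - \mu_k = -us/(2\sqrt{5}) + u^2/2 + O(s)$, $\sigma/\sigma_{k_0} = a + O(1/s)$, and $\sigma_k^2/\sigma_{k_0}^2 = 1 - u\sqrt{5}/s + O(u^2/s^2 + 1/s)$.

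Next I would take square roots to obtain $\sigma_{k_0}/\sigma_k = 1 + u\sqrt{5}/(2s) + O(u^2/s^2 + 1/s)$ and multiply by $\sigma/\sigma_{k_0}$ to finish the first expansion. For the second, the leading term $-us/(2\sqrt{5}\,\sigma_{k_0})$ reduces to exactly $b\,t_k$ once the explicit value of $\sigma_{k_0}$ is substituted and $t_k = 5^{3/4} u/\sqrt{s}$ is used; the remaining contributions are $u^2/(2\sigma_{k_0}) = O(t_k^2/\sqrt{s})$, $O(s/\sigma_{k_0}) = O(1/\sqrt{s})$, and the curvature correction $b\,t_k \cdot O(u/s) = O(t_k^2/\sqrt{s})$ arising from the expansion of $1/\sigma_k$. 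Multiplying the first expansion by $x$ and adding to the second then produces error terms that all fit one of the three allowed buckets $1/\sqrt{s}$, $|xt_k|/\sqrt{s}$, $t_k^2/\sqrt{s}$ via elementary inequalities, using $|t_k| = O(\sqrt{s})$ to absorb cross terms such as $|x|t_k^2/s$ against $|xt_k|/\sqrt{s}$.

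The main obstacle is the careful bookkeeping of error terms rather than any substantive analytic difficulty: there are several independent sources of corrections (the $O(s)$ slack in $\mu$ and $\mu_{k_0}$, the quadratic and higher Taylor remainders in $\mu_k$ and $\sigma_k^2$, the curvature of $\sqrt{\cdot}$, and the expansion of $1/\sigma_k$), and each must be tracked and shown to be dominated by one of the three permitted terms. A subsidiary point is that the Taylor expansion of $\sqrt{\sigma_k^2/\sigma_{k_0}^2}$ requires $|u|/s$ to be bounded strictly below $1/\sqrt{5}$; this is the bulk range $|u| \le \varepsilon s$ used in the sequel, and by \eqref{tail} the weights $p_k$ outside this bulk are exponentially small in $s$, so this restriction is compatible with the mixing argument of Section \ref{sec:mixing}.
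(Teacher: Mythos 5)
Your plan follows essentially the same route as the paper's proof: expand $\mu_k$ and $\sigma_k^2$ about $k_0$, use $c_0(1-c_0)=1/5$ and $1-2c_0=1/\sqrt{5}$ to extract $a$ and $b$, and sort the error terms into the three buckets. Your intermediate quantities ($\sigma_{k_0}^2=\sqrt{5}\,s^3/300+O(s^2)$, $\mu-\mu_k=-us/(2\sqrt{5})+u^2/2+O(s)$, $q^2=\sigma^2/\sigma_{k_0}^2=8/5+O(1/s)$) all agree with the paper's, and the bookkeeping closes as you describe. The one substantive difference is your explicit restriction to a bulk range of $k$ before inverting $\sigma_k^2/\sigma_{k_0}^2$, and here you are more careful than the paper: the paper asserts $\sigma_{k_0}/\sigma_k=1+O(D_k/s)$ for every $0<k<s/2$, which fails when $k$ is within $O(1)$ of $s/2$ (there $s-2k=O(1)$, so $\sigma_k\asymp s$ while $\sigma_{k_0}\asymp s^{3/2}$; one checks that $|y_k-y_k^*|\asymp s$ at $k=\lfloor s/2\rfloor$ whereas the right-hand side of \eqref{eq:y_k-*} is only $O(\sqrt{s})$, so the unrestricted statement itself breaks down at the edge). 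Your restricted version is what is actually provable, and as you note it suffices downstream, both because the excluded $k$ carry exponentially small total weight by \eqref{tail} and because Corollary \ref{corollary:y_k-*} is vacuously true once $(1+t_k^2)/\sqrt{s}\geq 1$. Two minor points: the threshold for invertibility is $|u|/s<1/(2\sqrt{5})$ (since $s-2k\approx s/\sqrt{5}-2u$), not $1/\sqrt{5}$; and the cross term $x\cdot O(1/s)$ arising from $\sigma/\sigma_{k_0}-a$ does not fit any of the three buckets when $t_k=0$ and $|x|\gg\sqrt{s}$ --- a blemish shared with the paper's own proof, and harmless in the application because the proof of Corollary \ref{corollary:y_k-*} handles large $|x|$ separately using the decay of $\phi$.
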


We will also show in the last section that
Lemma \ref{lemma:y_k-*} implies the following statement.

\begin{corollary} \label{corollary:y_k-*}
For all integers $k$ with $0<k<s/2$ we have

\begin{equation} \label{eq:Phi-Phi*}
|\Phi(y_k) - \Phi(y_k^*)|
= O\left(
  \frac{1}{\sqrt{s}} \, \big(1+t_k^2\big)
\right)
\end{equation}
uniformly for $x\in\R$.
\end{corollary}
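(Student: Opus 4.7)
The plan is to combine the Mean Value Theorem applied to $\Phi$ with Lemma \ref{lemma:y_k-*}. The only delicate feature is that the Lemma contains a term $|xt_k|$ that is not dominated by $1+t_k^2$ uniformly in $x$; the key observation is that exactly when $|xt_k|$ is large, the point $y_k^*$ is correspondingly far from $0$, so $\phi$ is small on any moderate neighborhood of $y_k^*$ and the Gaussian decay compensates. This becomes transparent once the Lemma's bound is rewritten in terms of $y_k^*$ rather than $x$.

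First I would use $y_k^* = ax + bt_k$ to eliminate $x$: since $|xt_k| \le (|y_k^*|\,|t_k| + |b|\,t_k^2)/a$, Lemma \ref{lemma:y_k-*} gives
\[
|y_k - y_k^*| \;\le\; \frac{C_1}{\sqrt{s}}\bigl(1 + |y_k^*|\,|t_k| + t_k^2\bigr)
\]
for a universal constant $C_1$. Then by the Mean Value Theorem $|\Phi(y_k)-\Phi(y_k^*)| = \phi(\xi_k)\,|y_k - y_k^*|$ for some $\xi_k$ between $y_k$ and $y_k^*$. Using $\phi \le 1/\sqrt{2\pi}$, the $1$ and $t_k^2$ summands contribute $O((1+t_k^2)/\sqrt{s})$ immediately; only the cross term $\phi(\xi_k)\,|y_k^*|\,|t_k|/\sqrt{s}$ requires further work.

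For the cross term I would split on how close $y_k$ is to $y_k^*$. In the \emph{close} case $|y_k - y_k^*| \le |y_k^*|/2$, the interval between them lies in $\{|u| \ge |y_k^*|/2\}$, so $\phi(\xi_k)\,|y_k^*| \le 2\,|\xi_k|\,\phi(\xi_k) \le 2M$ with $M = \sup_u |u|\phi(u) < \infty$, giving $\phi(\xi_k)\,|y_k^*|\,|t_k| \le 2M\,|t_k| = O(1+t_k^2)$. In the \emph{far} case $|y_k - y_k^*| > |y_k^*|/2$, the displayed bound forces $|y_k^*|\,(\sqrt{s}/2 - C_1|t_k|) < C_1(1+t_k^2)$; if additionally $|t_k| \le \sqrt{s}/(4C_1)$, one deduces $|y_k^*| \le 4C_1(1+t_k^2)/\sqrt{s}$ and hence $|y_k^*|\,|t_k| \le 1+t_k^2$, while if $|t_k| > \sqrt{s}/(4C_1)$ then $t_k^2 > s/(16 C_1^2)$ and $(1+t_k^2)/\sqrt{s}$ is bounded below by an absolute positive constant, so the trivial estimate $|\Phi(y_k)-\Phi(y_k^*)| \le 1$ already has the required form.

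The main obstacle, in my view, is noticing that no purely Lipschitz-based argument can work: a bound of the form $|\Phi(y_k)-\Phi(y_k^*)| \le C|y_k - y_k^*|$ inherits the $|xt_k|$ term and this is not $O((1+t_k^2)/\sqrt{s})$ for $|x|$ very large. One must exploit the decay of $\phi$, and rewriting everything in terms of $y_k^*$ (which simultaneously controls both $|xt_k|$ and $\phi(\xi_k)$) is the cleanest way I see to do so.
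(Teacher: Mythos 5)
Your proof is correct, and it reaches the bound by a genuinely different decomposition than the paper, even though the two essential ingredients are the same: Lemma \ref{lemma:y_k-*} followed by the Mean Value Theorem, with the decay of $\phi$ absorbing the troublesome $|xt_k|$ term. The paper first disposes of three special cases ($|t_k|\geq s^{1/4}$, $|t_k|\geq \eps_1|x|$, $|x|\leq x_0$), in which either the bound is trivial or $|xt_k|$ is already dominated by a constant times $1+t_k^2$, and in the remaining regime shows that $y_k$ and $y_k^*$ both lie between $\frac14\,ax$ and $\frac74\,ax$, so the MVT point $\xi$ satisfies both $|\xi|\gtrsim|x|$ and $|\xi|\gtrsim|t_k|$ and the quantity $(1+t_k^2)\,\phi\big(\tfrac{a}{4\eps_1}|t_k|\big)+\eps_1 x^2\,\phi\big(\tfrac14 a|x|\big)$ is uniformly bounded. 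You instead substitute $ax=y_k^*-bt_k$ to replace $|xt_k|$ by $|y_k^*|\,|t_k|$ up to harmless terms, eliminating $x$ from the estimate entirely; the only split then needed is whether $|y_k-y_k^*|\leq|y_k^*|/2$ (so that $|\xi_k|\geq|y_k^*|/2$ and $\phi(\xi_k)\,|y_k^*|\leq 2\sup_u|u|\phi(u)$) or not (in which case the Lemma forces $|y_k^*|=O\big((1+t_k^2)/\sqrt{s}\big)$ unless $|t_k|\gtrsim\sqrt{s}$, when the trivial bound $\leq 1$ suffices). Your reparametrization is arguably cleaner: it replaces the paper's four-way case analysis in $(x,t_k)$ by a two-way split governed by the single quantity $|y_k^*|$, and it isolates the one analytic fact being exploited, namely $\sup_u|u|\phi(u)<\infty$ (the paper uses $\sup_u u^2\phi(u)<\infty$). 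Both arguments give the same uniform $O\big((1+t_k^2)/\sqrt{s}\big)$ conclusion, and I see no gap in yours.
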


Hence,
\begin{equation}
Q_2
\,= \sum_{0<k<s/2} \Phi(y_k)p_k
\,= \sum_{0<k<s/2}
\Phi(y_k^*)p_k + \frac1{\sqrt{s}}\cdot
O\left(
   \sum_{0<k<s/2} \big(1+t_k^2\big)p_k \right).
\end{equation}

\begin{lemma} \label{lemma:t_k-square}
There exists a universal constant $K_0$ such that
for all $s\in\N$,

\begin{equation} \label{eq:t_k-square}
\sum_{0 \leq k \leq s/2}(1+t_k^2)\,p_k \,\leq\, K_0.
\end{equation}
\end{lemma}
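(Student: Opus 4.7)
The plan is to reduce the claimed bound to the second-moment statement for the weight distribution $W$ that was already established in Section \ref{sec:weights}. First I would split
\[
\sum_{0 \leq k \leq s/2}(1+t_k^2)\,p_k \;=\; 1 + \sum_{0 \leq k \leq s/2} t_k^2 \, p_k,
\]
using $\sum p_k = 1$. Next, substituting the definition $t_k = 5^{3/4}(k-k_0)/\sqrt{s}$ rewrites the remaining sum as
\[
\sum_{0 \leq k \leq s/2} t_k^2 \, p_k \;=\; \frac{5^{3/2}}{s}\, E\bigl[(W-k_0)^2\bigr].
\]

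The next step is to control $E[(W-k_0)^2]$ using the moments of $W$ recorded in Section \ref{sec:weights}. Since $\mu(W) = c_0 s + O(1)$ and $k_0 = \lfloor c_0 s\rfloor$, we have $|\mu(W) - k_0| = O(1)$. Expanding
\[
E\bigl[(W-k_0)^2\bigr] \;=\; \mathrm{Var}(W) + (\mu(W)-k_0)^2 \;=\; \sigma^2(W) + O(1),
\]
and plugging in $\sigma^2(W) = \frac{\sqrt 5}{25}\,s + O(1)$ gives
\[
\frac{5^{3/2}}{s}\, E\bigl[(W-k_0)^2\bigr] \;=\; \frac{5^{3/2}\cdot\sqrt 5}{25} + O(1/s) \;=\; 1 + O(1/s),
\]
because $5^{3/2}\cdot 5^{1/2}/25 = 25/25 = 1$. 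Hence the whole sum is at most $2 + O(1/s)$, which is bounded above by a universal constant $K_0$ valid for all $s \in \mathbb{N}$.

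There is no real obstacle: the computation is essentially routine once one recognizes that the factor $5^{3/4}/\sqrt{s}$ in the definition of $t_k$ was chosen precisely so that $t_k^2 p_k$ reproduces the variance scaling of $W$. The only point that requires a small amount of care is the cross-term: one must observe that centering at $k_0$ rather than at $\mu(W)$ costs only an additive $O(1)$ in the second moment, which is absorbed into the error term since it is divided by $s$. If desired, for very small $s$ one can verify the inequality directly by choosing $K_0$ large enough to accommodate the finitely many small cases, and for large $s$ the asymptotic bound above yields a sum at most, say, $3$.
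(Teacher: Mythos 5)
Your proof is correct and takes essentially the same route as the paper: both reduce $\sum_k t_k^2 p_k$ to the variance of $W$ computed in Section \ref{sec:weights}, using $|\mu(W)-k_0|=O(1)$. The only cosmetic difference is that you use the exact decomposition $E[(W-k_0)^2]=\sigma^2(W)+(\mu(W)-k_0)^2$ and track the constant (finding the sum tends to $2$), whereas the paper uses the cruder inequality $(\alpha-\gamma)^2\leq 2[(\alpha-\beta)^2+(\beta-\gamma)^2]$ and settles for $O(1)$.
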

Thus,
\begin{equation}
Q_2
= \sum_{0<k<s/2}\Phi(y_k)p_k
= \sum_{0<k<s/2}\Phi(y_k^*)p_k
+ O\left( \frac1{\sqrt{s}} \right).
\end{equation}

\ms
Let
\begin{equation} \label{def:Q3}
Q_3 := \sum_{0<k<s/2}\Phi(y_k^*)p_k.
\text{\quad Then,\ \ }|Q_2-Q_3| = O(1/\sqrt{s}).
\end{equation}

\ms
It would be natural to use the local approximation \eqref{eq:local} for the weights $p_k$ at this point.
However, it would be harder to deal with the accumulation of errors. So instead we will apply the following version of summation by parts and use the 
global approximation \eqref{eq:global}.

\begin{lemma} \label{lemma:summation-by-parts}
Let $m\leq n$ be integers. Suppose
$(U_k:m\leq k\leq n+1)$ and $(V_k:m-1\leq k\leq n)$
are two (finite) real sequences.
Then,

\begin{equation} \label{eq:summation-by-parts}
\sum_{k=m}^n U_k (V_k-V_{k-1})
= \sum_{k=m}^n (U_{k}-U_{k+1})V_k
\,+\,  \big[U_{n+1}V_n - U_mV_{m-1} \big].
\end{equation}

\end{lemma}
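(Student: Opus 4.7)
The identity is the classical Abel summation / summation-by-parts formula, so the plan is just a direct telescoping calculation; there is no real obstacle here beyond careful bookkeeping of indices.

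The plan is to expand the left-hand side by distributing $U_k$, reindex the resulting sum $\sum U_k V_{k-1}$ to line it up with the sum $\sum U_{k+1}V_k$ that appears on the right, and then peel off the boundary terms. Concretely, I would first write
\[
\sum_{k=m}^n U_k(V_k - V_{k-1}) = \sum_{k=m}^n U_k V_k - \sum_{k=m}^n U_k V_{k-1},
\]
and then substitute $j = k-1$ in the second sum to get $\sum_{j=m-1}^{n-1} U_{j+1} V_j$. Renaming $j$ back to $k$, the left-hand side becomes
\[
\sum_{k=m}^n U_k V_k - \sum_{k=m-1}^{n-1} U_{k+1} V_k.
\]

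Next I would manipulate the target right-hand side in the same direction: distribute $(U_k - U_{k+1})$ to obtain
\[
\sum_{k=m}^n (U_k - U_{k+1}) V_k + U_{n+1}V_n - U_m V_{m-1} = \sum_{k=m}^n U_k V_k - \sum_{k=m}^n U_{k+1} V_k + U_{n+1}V_n - U_m V_{m-1}.
\]
Subtracting the common sum $\sum_{k=m}^n U_k V_k$, the identity to verify reduces to
\[
\sum_{k=m-1}^{n-1} U_{k+1} V_k = \sum_{k=m}^n U_{k+1} V_k - U_{n+1} V_n + U_m V_{m-1},
\]
which is immediate: the right-hand side removes the $k=n$ term ($U_{n+1}V_n$) from the sum $\sum_{k=m}^n$ and adds back the $k=m-1$ term ($U_m V_{m-1}$), producing exactly the range $m-1 \le k \le n-1$ on the left. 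This completes the proof.

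The whole argument is a one-line reindexing; no inequality, no induction, and no hypothesis on the sequences beyond that the required entries ($U_m,\ldots,U_{n+1}$ and $V_{m-1},\ldots,V_n$) are defined. The only thing to be careful about is the index ranges at the endpoints, which is exactly why the statement of the lemma specifies $V$ on $[m-1,n]$ and $U$ on $[m,n+1]$.
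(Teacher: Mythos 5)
Your reindexing argument is correct, and it is exactly the elementary term-by-term verification the paper has in mind (the paper merely remarks that the identity ``can be verified easily by comparing the two sides term by term'' and gives no further detail). Nothing to add.
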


(Lemma \ref{lemma:summation-by-parts} can be verified easily by comparing the two sides term by term.)

\bs
Write
$u_k=U_k-U_{k+1}$ ($m\leq k\leq n$)
and $v_k=V_k-V_{k-1}$ ($m\leq k\leq n$).

Thus \eqref{eq:summation-by-parts} becomes

\begin{equation} \label{eq:SbP}
\sum_{k=m}^n U_k\, v_k
= \sum_{k=m}^n u_k\,V_k
\,+\,  \big[U_{n+1}V_n - U_mV_{m-1} \big].
\end{equation}

\ms
Note also: for all $m\leq k\leq n$,
$$U_k=U_{n+1}+\sum_{k\leq i\leq n} u_i
\mbox{\ \ and\ \ }
V_k=V_{m-1}+\sum_{m\leq i\leq k} v_i.
$$

\ms
\begin{corollary} \label{corollary:distance-estimate}
Let $m\leq n$ be integers. Suppose
$(U_k:m\leq k\leq n+1)$, $(U_k':m\leq k\leq n+1)$,
$(V_k:m-1\leq k\leq n)$, and $(V_k':m-1\leq k\leq n)$
are real sequences.
Define $u_k$, $u_k'$, $v_k$, $v_k'$
as in Lemma \ref{lemma:summation-by-parts}.
Write
\begin{equation}
\delta_U = \sup_{m\leq k\leq n} |U_k-U_k'|,
\quad 
\delta_V = \sup_{m\leq k\leq n} |V_k-V_k'|.
\end{equation}

\vspace{-8pt}
Then,

\vspace{-18pt}
\begin{equation}
\begin{split}
&\Big|\,
  \sum_{k=m}^n U_kv_k - \sum_{k=m}^n U_k'v_k'
\,\Big| \\
& \,\leq
\delta_U \sum |v_k'| + \delta_V \sum |u_k|
+ \left|U_{n+1}V_n-U_mV_{m-1}\right|
+ \left|U_{n+1}V_n'-U_mV_{m-1}'\right|.
\end{split}
\end{equation}

\end{corollary}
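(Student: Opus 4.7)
The plan is to decompose the telescoping difference so that Lemma \ref{lemma:summation-by-parts} can be applied to the piece where the $V$-sequences differ, while the piece where the $U$-sequences differ can be bounded directly.

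The first step is to add and subtract $\sum U_k v_k'$, yielding
$$\sum_{k=m}^n U_k v_k - \sum_{k=m}^n U_k' v_k' = \sum_{k=m}^n (U_k - U_k')v_k' + \sum_{k=m}^n U_k (v_k - v_k').$$
The triangle inequality, together with $|U_k - U_k'| \leq \delta_U$, bounds the first sum by $\delta_U \sum |v_k'|$, which is one of the four terms in the claimed estimate. Choosing this particular decomposition (rather than adding and subtracting $\sum U_k' v_k$) is what forces $|v_k'|$ rather than $|v_k|$ to appear, matching the statement exactly.

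For the second sum I would introduce the auxiliary sequence $W_k := V_k - V_k'$ for $m-1 \leq k \leq n$, so that $v_k - v_k' = W_k - W_{k-1}$. Applying \eqref{eq:SbP} with $V$ replaced by $W$ gives
$$\sum_{k=m}^n U_k(W_k - W_{k-1}) = \sum_{k=m}^n u_k W_k + \bigl[U_{n+1}W_n - U_m W_{m-1}\bigr].$$
The interior sum is at most $\delta_V \sum |u_k|$ since $|W_k| \leq \delta_V$, producing the second term of the bound.

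The only remaining piece is the boundary contribution $U_{n+1}W_n - U_m W_{m-1}$. Expanding $W_k = V_k - V_k'$ rewrites this as
$$\bigl[U_{n+1}V_n - U_m V_{m-1}\bigr] - \bigl[U_{n+1}V_n' - U_m V_{m-1}'\bigr],$$
and a final application of the triangle inequality produces exactly the last two absolute-value terms of the stated bound. Adding the three contributions completes the argument. There is no real obstacle here; the only care needed is choosing the correct decomposition in the first step so that the primed versus unprimed indices in $\sum |v_k'|$ and $\sum |u_k|$ come out as written in the statement.
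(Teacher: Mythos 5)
Your proof is correct and follows essentially the same route as the paper: both arguments arrive at the identical intermediate identity
\[
\sum_{k=m}^n U_kv_k - \sum_{k=m}^n U_k'v_k'
= \sum_{k=m}^n (U_k-U_k')v_k' + \sum_{k=m}^n u_k(V_k-V_k')
+ \bigl[U_{n+1}V_n-U_mV_{m-1}\bigr] - \bigl[U_{n+1}V_n'-U_mV_{m-1}'\bigr]
\]
before applying the triangle inequality. The only cosmetic difference is that you apply Lemma \ref{lemma:summation-by-parts} once to the difference sequence $W_k = V_k - V_k'$, whereas the paper applies it twice (once to $V$ and once to $V'$) and adds four identities; by linearity these are the same computation.
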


This simple corollary of Lemma \ref{lemma:summation-by-parts} will be proved in the last section.

\bs
Define
\begin{equation} \label{def:F_k^*}
F_k^* \,=\,
\begin{cases}
  1 & \hbox{if $k\leq0$,} \\
  \Phi(y_k^*)=\Phi(ax+bt_k) & \hbox{if $0<k<s/2$,} \\
  0 & \hbox{if $k\geq s/2$.}
\end{cases}
\end{equation}

Then,
\begin{equation} \label{eq:Q3}
Q_3 = \sum_{0<k<s/2} F_k^* p_k
    = \sum_{0\leq k\leq s/2}\!F_k^* p_k \,-\, p_0
    = \sum_{0\leq k\leq s/2}\!F_k^* p_k \,-\,
(1/Fib_{s+1}).
\end{equation}

\bs
Let
\begin{equation} \label{eq:Q4}
Q_4 = \sum_{0\leq k\leq s/2}\!F_k^* p_k
\end{equation}

Thus,
\begin{equation}
Q_3 \,=\, Q_4 \,-\, (1/Fib_{s+1})
    \,=\, Q_4 \,+\, O(1/\sqrt{s}).
\end{equation}

\bs
Note: The doubly infinite sequence
$(y_k^*:k\in\Z)=(ax+bt_k:k\in\Z)$
  is monotone decreasing, so $(\Phi(y_k^*):k\in\Z)$
  is monotone decreasing.
Hence $(F_k^*:k\in\Z)$ is also monotone decreasing.
Consequently, the numbers
\begin{equation} \label{def:f_k}
f_k := F_k^* - F_{k+1}^*
\quad(k\in\Z)
\end{equation}
are non-negative and add up to 1.

\bs
We apply Corollary \ref{corollary:distance-estimate}
with $m=0$,\, $n=\lfloor s/2\rfloor$,\,
$U_k=F_k^*$,\, $U_k'=\Phi(ax+bt_k)$,\,
$V_k=S_k$,\, $V_k'=\Phi(t_k)$.
Note that for us:
$U_m=U_0=1$,
$U_{n+1}=F_{\lfloor s/2\rfloor+1}^*=0$,
$V_{m-1}=S_{-1}=0$.
Hence,
\begin{equation} \label{eq:error-estimate}
\Big|\,
  \sum_{0\leq k\leq s/2} U_kv_k
\, - \sum_{0\leq k\leq s/2} U_k'v_k'
\,\Big|
\,\leq\,
\delta_U \sum |v_k'| + \delta_V \sum |u_k|
\,+\, \Phi(t_{-1}).
\end{equation}

\ms
Plugging in our values, we get
$\delta_U = 1-\Phi(ax+bt_0)$
if $s$ is odd, and when $s$ is even,
$\delta_U=
\max\{1-\Phi(ax+bt_0),\Phi(ax+bt_n)\}$.
In both cases, $\delta_U$ is exponentially small in $s$.
As far as $\delta_V$ is concerned,
\eqref{eq:global} gives
$$
\delta_V < 0.7975/\sigma(W) = O(1/\sqrt{s}).
$$

\ms
Also, both the $u_k(=f_k)$ and the
$v_k'(=\Phi(t_k)-\Phi(t_{k-1}))$
are non-negative, 
hence
$$
\sum_{0\leq k\leq s/2} |u_k|
= \sum_{0\leq k\leq s/2} u_k
=U_0 - U_{n+1}
=F_0^* - F_{n+1}^* = 1-0=1
$$
and
$$
\sum_{0\leq k\leq s/2} |v_k'|
= \sum_{0\leq k\leq s/2} v_k'
= \Phi(t_n)-\Phi(t_{-1})
\leq1.
$$

Thus, \eqref{eq:error-estimate} becomes
\begin{equation} \label{eq:2323}
\Big|\,
  \sum_{0\leq k\leq s/2} U_kv_k
\, - \sum_{0\leq k\leq s/2} U_k'v_k'
\,\Big|
\,\leq\,
\delta_U + \delta_V \,+\, \Phi(t_{-1})
\,\leq\, \frac{K_1}{\sqrt{s}}
\end{equation}
for some universal constant $K_1$.

\ms
Recall that
$$
Q_4 = \sum_{0\leq k\leq s/2}\!F_k^* p_k
= \sum_{0\leq k\leq s/2} U_kv_k.
$$

\ms
Let
\begin{equation} \label{def:Q5}
Q_5 \,:= \sum_{0\leq k\leq s/2} U_k'v_k'
\, = \sum_{0\leq k\leq s/2} 
  \Phi(ax+bt_k)\big[\Phi(t_k)-\Phi(t_{k-1})\big].
\end{equation}

\bs
Thus, by \eqref{eq:2323},
$$
|Q_4-Q_5|\,\leq\,K_1/\sqrt{s}.
$$

\bs
\begin{lemma} \label{lemma:estimating-p_k*}
For all integers $k\in\Z$,
\begin{equation} \label{eq:estimating-p_k*}
\Phi(t_k) - \Phi(t_{k-1})
\,=\, \phi(t_k)dt_k
+ \frac1{\sqrt{s}}\,O\big(|\phi'(t_k)|dt_k\big)
  + O(1/s^{3/2}).
\end{equation}
\end{lemma}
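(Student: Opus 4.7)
The plan is to use a Taylor expansion of $\Phi$ to third order around $t_k$, evaluated at $t_{k-1}=t_k-dt_k$. By Taylor's theorem with Lagrange remainder, for each $k\in\Z$ there exists $\eta_k$ between $t_{k-1}$ and $t_k$ such that
\[
\Phi(t_{k-1}) = \Phi(t_k)-\phi(t_k)\,dt_k+\tfrac12\phi'(t_k)(dt_k)^2-\tfrac16\phi''(\eta_k)(dt_k)^3.
\]
Rearranging gives
\[
\Phi(t_k)-\Phi(t_{k-1}) = \phi(t_k)\,dt_k - \tfrac12\phi'(t_k)(dt_k)^2 + \tfrac16\phi''(\eta_k)(dt_k)^3,
\]
and the task is to match the two remaining terms to the two error terms in the lemma.

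For the second term, recall $dt_k = 5^{3/4}/\sqrt{s}$, which is independent of $k$. Thus $(dt_k)^2 = (5^{3/4}/\sqrt{s})\cdot dt_k$, so
\[
\bigl|\tfrac12\phi'(t_k)(dt_k)^2\bigr|
= \frac{5^{3/4}}{2\sqrt{s}}\,|\phi'(t_k)|\,dt_k
= \frac{1}{\sqrt{s}}\cdot O\!\bigl(|\phi'(t_k)|\,dt_k\bigr),
\]
which fits inside the middle $O$-term of \eqref{eq:estimating-p_k*}.

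For the third-order remainder, the function $\phi''(t)=(t^2-1)\phi(t)$ is bounded uniformly on $\mathbb{R}$ by some absolute constant $M$ (as $t^2 e^{-t^2/2}$ is bounded). Since $(dt_k)^3 = 5^{9/4}/s^{3/2}$ is also independent of $k$, the Lagrange term satisfies
\[
\bigl|\tfrac16\phi''(\eta_k)(dt_k)^3\bigr| \leq \frac{M\cdot 5^{9/4}}{6\,s^{3/2}} = O(1/s^{3/2})
\]
uniformly in $k\in\Z$. Combining the two estimates yields \eqref{eq:estimating-p_k*}.

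There is no real obstacle here: the only choice to make is to carry the Taylor expansion to \emph{third} order rather than second. A second-order expansion with Lagrange remainder would produce a $\phi'$ evaluated at an intermediate point $\xi_k$ rather than at $t_k$ itself, and matching that to the stated form would again require controlling $\phi'(\xi_k)-\phi'(t_k)$ by the uniform bound on $\phi''$, arriving at the same estimate via a longer route. Using the third-order expansion directly produces $\phi'(t_k)$ in the leading error term exactly as written, and the uniform boundedness of $\phi''$ delivers the $O(1/s^{3/2})$ tail.
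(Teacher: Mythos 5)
Your proof is correct and is essentially identical to the paper's: both expand $\Phi$ to third order around $t_k$ with a Lagrange remainder, absorb the exact second-order term $-\tfrac12\phi'(t_k)(dt_k)^2$ into the $\tfrac1{\sqrt{s}}O(|\phi'(t_k)|\,dt_k)$ term using $dt_k=5^{3/4}/\sqrt{s}$, and bound the remainder via the uniform boundedness of $\phi''$. No differences worth noting.
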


\bs
Applying Lemma \ref{lemma:estimating-p_k*}, we get
\begin{equation} \label{eq:Q6}
\begin{split}
Q_5\,&=\,
\sum_{0\leq k\leq s/2}\,\Phi(ax+bt_k)\,[\Phi(t_k) - \Phi(t_{k-1})] \\
& =\,
\sum_{0\leq k\leq s/2}\,\Phi(ax+bt_k)\,\phi(t_k)\,dt_k
\,+\,\frac1{\sqrt{s}}\cdot
O\left(\sum_{0\leq k\leq s/2}|\phi'(t_k)|\,dt_k\right)
 + O(1/\sqrt{s}) \\
& =\,
\sum_{0\leq k\leq s/2}\,\Phi(ax+bt_k)\,\phi(t_k)\,dt_k
\,+\,O(1/\sqrt{s}).
\end{split}
\end{equation}
For the last line we used the fact that the $O(\sum...)$ term is a (partial) Riemann-sum for the convergent integral $\int_{-\infty}^\infty|\phi'(t)|dt$.
The bounded non-negative function $|\phi'(t)|$ is made up of four monotone pieces,
and our mesh size is $dt_k=O(1/\sqrt{s})$.

\bs
The sum in the last line of \eqref{eq:Q6} can be extended for all integers $k$ with an error of only $O(1/\sqrt{s})$. This is because
$$
\sum_{k<0}\,\Phi(ax+bt_k)\,\phi(t_k)\,dt_k
< \sum_{k<0}\,\phi(t_k)\,dt_k
$$
and the right-hand side is a Riemann sum for the function $\phi(t)$
integrated from $-\infty$ to $-5^{3/4}k_0/\sqrt{s}$.
This integral is exponentially small in $s$.
Since on this domain $\phi(t)$ is monotone increasing and is between 0 and $1/\sqrt{2\pi}$, the Riemann sum approximation itself only introduces an error
at most $dt_k/\sqrt{2\pi} = O(1/\sqrt{s})$.
The same applies to the sum
$\sum_{k>s/2}\,\Phi(ax+bt_k)\,\phi(t_k)\,dt_k$.

\ms
Thus,
\begin{equation} 
Q_5 = 
\sum_{k\in\Z}\,\Phi(ax+bt_k)\,\phi(t_k)\,dt_k
\,+\,O(1/\sqrt{s}).
\end{equation}

Let
\begin{equation} \label{def:Q6}
Q_6 := \sum_{k\in\Z}\,\Phi(ax+bt_k)\,\phi(t_k)\,dt_k.
\text{\quad Then,\ \ }Q_5 = Q_6 + O(1/\sqrt{s}).
\end{equation}

\ms
Define
\begin{equation} \label{def:Q7}
Q_7 := \int_{-\infty}^\infty
\Phi(ax+bt)\,\phi(t)dt.
\end{equation}

\begin{lemma} \label{lemma:total-variation}
Let $h:\R\to\R$ be a differentiable function.
Assume
$$V_h=\int_{-\infty}^\infty |\,h't)\,|\,dt < \infty.$$
Let $I_j=[\ell_j,r_j]$ \($j\in\Z$\) be a partition of $\R$ into intervals of lengths not exceeding
$\delta>0$, and let $\xi_j\in I_j$ be arbitrary points.
Then,
$$
\left|\,
\sum_{j\in\Z} h(\xi_j)\,|I_j|
\,-\int_{-\infty}^\infty h(t)\,dt
\,\right|
\leq V_h\,\delta\,.
$$
\end{lemma}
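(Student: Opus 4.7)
The plan is to reduce the global bound to a sum of local estimates on each piece $I_j$ of the partition, and then to control each local error by the total variation of $h$ on that interval.

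First, I write the difference as a single sum by absorbing the integral into the partition:
\begin{equation*}
\sum_{j\in\Z} h(\xi_j)\,|I_j| - \int_{-\infty}^\infty h(t)\,dt
\,=\, \sum_{j\in\Z} \int_{I_j}\bigl[h(\xi_j)-h(t)\bigr]\,dt.
\end{equation*}
(The hypotheses $V_h<\infty$ and the existence of $\int h$ together ensure absolute convergence of everything in sight, so rearrangements are legal.) Next, for $t\in I_j$ the fundamental theorem of calculus gives
\begin{equation*}
|h(\xi_j)-h(t)|
\,=\,\Bigl|\int_{\xi_j}^t h'(s)\,ds\Bigr|
\,\leq\, \int_{I_j}|h'(s)|\,ds,
\end{equation*}
since both $\xi_j$ and $t$ lie in $I_j$.

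Then I integrate over $t\in I_j$, use $|I_j|\leq\delta$, and sum:
\begin{equation*}
\Bigl|\sum_{j\in\Z}\int_{I_j}\bigl[h(\xi_j)-h(t)\bigr]\,dt\Bigr|
\,\leq\, \sum_{j\in\Z}|I_j|\int_{I_j}|h'(s)|\,ds
\,\leq\, \delta\sum_{j\in\Z}\int_{I_j}|h'(s)|\,ds
\,=\, \delta\,V_h,
\end{equation*}
since the $I_j$ partition $\R$. This is exactly the claimed inequality.

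The proof is essentially a one-line observation once the right reformulation is made, so there is no serious obstacle; the only subtle point is the bookkeeping to justify that $\sum_j h(\xi_j)|I_j|$ and $\int h$ are both well-defined and can be combined termwise. This is immediate because $V_h<\infty$ forces $h$ to have limits at $\pm\infty$ (necessarily zero whenever $\int h$ converges), so outside a compact set both $|h|$ and the contributions to the Riemann sum decay at the rate dictated by the tails of $|h'|$. In the application of the lemma inside the paper, $h(t)=\Phi(ax+bt)\phi(t)$, which is smooth with rapidly decaying derivative, so all hypotheses are met trivially and the resulting bound $O(\delta)=O(1/\sqrt{s})$ will be what is needed to finish the estimate of $Q_6-Q_7$.
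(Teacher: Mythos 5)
Your proof is correct and is essentially the paper's argument: both reduce to the local estimate $\bigl|h(\xi_j)|I_j|-\int_{I_j}h\bigr|\leq|I_j|\int_{I_j}|h'|$ on each interval and then sum, using $|I_j|\leq\delta$. The only cosmetic difference is that the paper obtains the local bound via the mean value theorem for integrals (producing a point $\eta_j$ and bounding $|h(\xi_j)-h(\eta_j)|$ by the FTC), whereas you integrate the pointwise FTC bound $|h(\xi_j)-h(t)|\leq\int_{I_j}|h'|$ directly over $t\in I_j$.
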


\bs
We apply this lemma to the function
$h(t) = \Phi(ax+bt)\,\phi(t)$
with $\delta=dt_k=5^{3/4}/\sqrt{s}$.

\ms
Thus,
$
h'(t) = \phi(t)\cdot[b\,\phi(ax+bt)-t\,\Phi(ax+bt)]
$,
whence
$
|h'(t)| \leq \phi(t)\cdot(|b|+|t|)
$.

\ms
Since
$$
\int_{-\infty}^\infty |h'(t)|\,dt < \infty
$$
uniformly for $x\in\R$, by Lemma \ref{lemma:total-variation} we get
$$
Q_6 = Q_7 + O(1/\sqrt{s}).
$$

\begin{lemma}\label{lemma:mixing-normals}

Let $a$ and $b$ be real numbers.
Then for all $x\in\R$,

$$
\int_{-\infty}^\infty
\Phi(ax+bt)\,\varphi(t)dt
= \Phi\left(\frac{ax}{\sqrt{1+b^2}}\right).
$$
\end{lemma}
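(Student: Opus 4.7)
The plan is to recognize this as a classical identity about mixing normal distributions, which is most cleanly proved by a probabilistic interpretation rather than a direct computation of the integral.

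First, I would introduce two independent standard normal random variables $Z_1$ and $Z_2$, so that $\Phi(u) = P(Z_1 \leq u)$ for any $u \in \R$ and $Z_2$ has density $\varphi$. Conditioning on $Z_2 = t$ and then integrating out $t$ gives
\begin{equation*}
\int_{-\infty}^\infty \Phi(ax+bt)\,\varphi(t)\,dt
= \int_{-\infty}^\infty P(Z_1 \leq ax + bt)\,\varphi(t)\,dt
= P(Z_1 \leq ax + bZ_2) = P(Z_1 - bZ_2 \leq ax).
\end{equation*}
Since $Z_1$ and $Z_2$ are independent standard normals, the linear combination $Z_1 - bZ_2$ is normally distributed with mean $0$ and variance $1 + b^2$. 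Hence $(Z_1 - bZ_2)/\sqrt{1+b^2}$ is standard normal, and the probability above equals $\Phi(ax/\sqrt{1+b^2})$, as desired.

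An alternative, purely analytic route (useful as a sanity check) is to differentiate both sides with respect to $x$. The derivative of the right-hand side is $(a/\sqrt{1+b^2})\,\varphi(ax/\sqrt{1+b^2})$, while differentiation under the integral gives $a\int \varphi(ax+bt)\varphi(t)\,dt$. Completing the square in the exponent of $\varphi(ax+bt)\varphi(t)$ reduces this Gaussian integral to the same expression; combined with the fact that both sides tend to $0$ as $x \to -\infty$, the identity follows.

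There is no real obstacle here: the only thing to be careful about is the sign of $b$ (immaterial since only $b^2$ appears), and the justification of Fubini/conditioning, which is immediate because $\Phi$ is bounded and $\varphi$ is integrable. I would write the probabilistic proof as the main argument since it takes only a few lines.
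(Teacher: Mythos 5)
Your probabilistic argument is exactly the paper's proof: introduce independent standard normals $Z_1, Z_2$, rewrite the integral as $P(Z_1 - bZ_2 \leq ax)$, and use that $Z_1 - bZ_2$ is normal with variance $1+b^2$. The proposal is correct and matches the paper's approach.
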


\bs
We apply Lemma \ref{lemma:mixing-normals} with $a=\sqrt{8/5}$ and $b=-\sqrt{3/5}$ to obtain
$$
Q_7 = \Phi(x).
$$
This completes the proof of Theorem \ref{thm:main}. Namely, we have shown that
$$F(x) \,=\,\Phi(x)\,+\,O(1/\sqrt{s})$$
uniformly in $x\in\R$.

\bs\ms
\section{Computational Proofs of the Lemmas}
\label{sec:lemmas}

\ms
\begin{replemma}{lemma:y_k-*}

For all integers $k$ with $0<k<s/2$,
\begin{equation*} 
|y_k-y_k^*| = \frac1{\sqrt{s}} \cdot O(1+|xt|+t_k^2).
\end{equation*}
\end{replemma}

\begin{proof}

Recall that $\mu_k = \frac{k(s+1-k)}{2}$, $\sigma_k^2 = \frac{k(s+1-k)(s-2k)}{12}$, $k_0 = \lfloor \frac{5-\sqrt{5}}{10} s \rfloor$. Let $D_k = k-k_0$. Then

\begin{equation}  \label{eq:sigma-is-constant}
\frac{\sigma^2_k}{\sigma_{k_0}^2} = \frac{(k_0+ D_k)(s+1-k_0- D_k)(s-2k_0-2 D_k)}{k_0(s+1-k_0)(s-2k_0)}
= 1 + O\left(\frac{ D_k}{s}\right).
\end{equation}

Therefore
$$
y_k = \frac{1}{\sigma_k} \big( (\mu-\mu_k) + x \sigma \big) = \left[ 1 + O\left( \frac{D_k}{s} \right) \right]
 \cdot \frac{1}{\sigma_{k_0}} \big( (\mu-\mu_k) + x\sigma \big).
$$
Let $q=\sigma/\sigma_{k_0}$. Then
$$
y_k = \left[ 1 + O\left( \frac{ D_k}{s} \right) \right] \cdot q \cdot \left( \frac{\mu-\mu_k}{\sigma} + x \right).
$$

Now note that
\begin{align*}
\mu_{k_0} &= \frac{1}{2}\left(\frac{5-\sqrt{5}}{10}s\right)\left(s+1-\frac{5-\sqrt{5}}{10}s\right)+O(s)\\
&=\frac{1}{2}\left(\frac{5-\sqrt{5}}{10}\right)\left(1-\frac{5-\sqrt{5}}{10}\right)s^2+O(s) \\
&= \frac{s^2}{10} +O(s)
\,=\, \mu + O(s).
\end{align*}
So
\begin{align*}
\mu-\mu_k &= \mu_{k_0} - \mu_k + O(s)\\
&= \frac{1}{2}\left( k_0(s+1-k_0) - k(s+1-k) \right) + O(s)\\
& = \frac{1}{2}(k-k_0)\Big( -s -1 +(k+k_0) \Big)+O(s)\\
& = \frac{1}{2}(k-k_0)\Big( -s -1 +(k-k_0) + 2k_0 \Big)+O(s)\\
& = \frac{1}{2}D_k \Big( -s -1 + D_k + \frac{5-\sqrt{5}}{5} \, s \Big)+O(s)\\
&=-\frac{\sqrt{5}}{10}\cdot s D_k  + O(D_k^2) +O(s). 
\end{align*}
(Above and below we use the obvious inequality: $2 D_k \leq  D_k^2 + 1$.)

\ms
Therefore
\begin{align*}
\frac{\mu-\mu_k}{\sigma} &= \frac{-\frac{\sqrt{5}}{10}\cdot s D_k + O(D_k^2) +O(s)}{ \sqrt{ \frac{2 \sqrt{5}}{375}} s^{3/2} \left[1 + O\left( \frac{1}{s} \right) \right] } \\
&= \sqrt{ \frac{375}{2 \sqrt{5}} } \left( -\frac{\sqrt{5}}{10}\cdot \frac{ D_k}{\sqrt{s}} +O\left(\frac{ D_k^2}{ s^{3/2} }\right) +O\left( \frac{1}{\sqrt{s}} \right) \right) \left[1 + O\left( \frac{1}{s} \right) \right] \\
&=  -\frac{3^{1/2} \ 5^{3/4}}{2^{3/2}}\cdot \frac{ D_k}{\sqrt{s}} + O \left( \frac{ D_k^2}{ s^{3/2} } \right) +O\left( \frac{1}{\sqrt{s}} \right).
\end{align*}

Finally, setting $t_k=5^{3/4} D_k / \sqrt{s}$
and using $|D_k|\leq s$ gives
\begin{align*}
y_k &= \left[ 1 + O\left( \frac{ D_k}{s} \right) \right] \cdot q \cdot \left( \frac{\mu-\mu_k}{\sigma} + x \right)\\ 
&= \left[ 1 + O\left( \frac{ t_k}{\sqrt{s}} \right) \right] \cdot q \cdot \left(x -\sqrt{\frac{3}{8}} \, t_k + O \left( \frac{ t_k^2}{ \sqrt{s} } \right) +O\left( \frac{1}{\sqrt{s}} \right) \right)\\
&= q \cdot \left(x - \sqrt{ \frac{3}{8} } \, t_k\right) + \frac{1}{\sqrt{s}} \cdot O \left( 1+ |xt_k|+t_k^2 \right).
\end{align*}

But $q$ is essentially a constant. That is,
\begin{align*}
q^2 = \frac{\sigma^2}{\sigma_{k_0}^2} &= \frac{  \frac{2\sqrt{5}}{375} s^3 + O(s^2)  }{   \frac{1}{12}k_0(s+1-k_0)(s-2k_0) + O(s^2)  } \\
&= \frac{  \frac{2\sqrt{5}}{375} s^3 + O(s^2)  }{   \frac{1}{12}c_0(1-c_0)(1-2c_0)s^3 \left[ 1+ O\left(\frac{1}{s}\right) \right]  }\\
&= \frac{8}{5}+ O\left(\frac{1}{s}\right).
\end{align*}
So \,$q = \sqrt{8/5} + O(1/s)$. Therefore
\begin{align*}
y_k 
&= \left(\sqrt{\frac{8}{5}} \, x - \sqrt{ \frac{3}{5} } \, t_k \right)
 + \frac{1}{\sqrt{s}} \cdot O\left( 1+|xt|+t_k^2 \right)
 = y_k^*
 + \frac{1}{\sqrt{s}} \cdot O\left( 1+|xt|+t_k^2
 \right).
\end{align*}
\end{proof}

\ms
\begin{repcorollary}{corollary:y_k-*}

For all integers $k$ with $0<k<s/2$ we have

\begin{equation*}
|\Phi(y_k) - \Phi(y_k^*)|
= O\left(
  \frac{1}{\sqrt{s}} \, \big(1+t_k^2\big)
\right)
\end{equation*}
uniformly for $x\in\R$.

\end{repcorollary}

\begin{proof}  \mbox{}

\ms
Let $K_2$ be the implied constant in \eqref{eq:y_k-*}. 
Let $\eps_1=\sqrt{2/3}$,
$x_0=16K_2/a$, and
$s_0=(8K_2/a)^4$.

\ms
{\em Special case I:}\ \, $|t_k|\geq s^{1/4}$.

\ms
Then $1+t_k^2>s^{1/2}$,
so $\frac{1}{\sqrt{s}}(1+t_k^2)>1$.
Hence \eqref{eq:Phi-Phi*} is automatically true (independent of the value of $x$).

\ms
{\em Special case II:}\ \, $|t_k|\geq \eps_1|x|$.

\ms
Then $1+|xt_k|+t_k^2 \leq 1+(\frac1{\eps_1}+1)t_k^2
< 3(1+t_k^2)$.

\ms
{\em Special case III:}\ \, $|x|\leq x_0$.

\ms
Then $1+|xt_k|+t_k^2
\leq 1+x_0|t_k|+t_k^2\leq(1+x_0/2)(1+t_k^2)
=O(1+t_k^2)$.

\bs
For the rest of this proof we will assume
$k$ is an integer with $0<k<s/2$ satisfying:
\begin{equation*}
x>x_0,\quad |t_k|<\eps_1|x|,\quad\mbox{and}\ \ |t_k|<s^{1/4}.
\end{equation*}

\ms
We will first show that both $y_k$ and $y_k^*$ are between $\frac14\,ax$ and $\frac74\,ax$. This will allow us to apply the Mean Value Theorem to prove the corollary.

\ms
Recall that $a=\sqrt{8/5}$, $b=-\sqrt{3/5}$,
and $t_k = 5^{3/4} \,(k-k_0)/\sqrt{s}$.
Thus,
$$
|bt_k|=\sqrt{3/5}\,|t_k| < \sqrt{3/5}\,\eps_1|x|
=\frac12|ax|.
$$

\ms
Consequently,
\vspace{-10pt}
$$
\,y_k^*=ax+bt_k \text{\ is between\ } \frac12\,ax
\text{\ and\ } \frac32\,ax,
\text{\ whence\ } |y_k^*| > \frac12\,a|x|.
$$

\bs
Now we estimate $y_k$:

$$
|y_k^*-y_k|\leq \frac{K_2}{\sqrt{s}}
\cdot(1+|xt_k|+t_k^2)
= \frac{K_2}{\sqrt{s}}\cdot(1+t_k^2)
+ \frac{K_2}{\sqrt{s}}\cdot|xt_k|.
$$
The first term on the right-hand side is estimated as
$$   \frac{K_2}{\sqrt{s}}\,(1+t_k^2)
<    \frac{K_2}{\sqrt{s}}\,(1+s^{1/2})
=    K_2\,(1+s^{-1/2})
\leq 2K_2
\leq\frac18\,a|x|
$$
for $x\geq x_0$.

\ms
For the second term we have
$$   \frac{K_2}{\sqrt{s}}\cdot|xt_k|
<    \frac{K_2}{\sqrt{s}}\cdot |x|s^{1/4}
=    \frac{K_2}{s^{1/4}}\cdot |x|
\leq\frac18\,a|x|
$$
for $s\geq s_0$.

\bs
Consequently,
\vspace{-10pt}
$$|y_k^*-y_k| < \frac14\,a|x|,
\text{\ and thus $y_k$ is between $\frac14\,ax$
and $\frac74\,ax$ as desired.}
$$

\bs
By the Mean Value Theorem, there is a $\xi$
between $y_k$ and $y_k^*$ such that
$\Phi(y_k^*)-\Phi(y_k) = \phi(\xi)\,(y_k^*-y_k)$.
As we showed above,
$\xi$ is between $\frac14\,ax$ and $\frac74\,ax$,
and hence
$$
|\xi| > \frac14\,a|x|
> \frac{a}{4\eps_1}\,|t_k|.
$$
Consequently, since $\phi$ is monotone,

\begin{equation*}
\phi(\xi) = \phi(|\xi|)
< \phi\left(\frac14\,a|x|\right)
\quad\text{and}\quad
\phi(\xi) < \phi\left(\frac{a}{4\eps_1}\,|t_k|\right).
\end{equation*}

\bs
We obtain:
\begin{equation*}
\begin{split}
|\Phi(y_k^*)-\Phi(y_k)| & = \phi(\xi)\,|y_k^*-y_k|
\leq \phi(\xi)\,\frac{K_2}{\sqrt{s}}\cdot (1+|xt_k|+t_k^2) \\
& < \frac{K_2}{\sqrt{s}}
\cdot\left[\,(1+t_k^2)\,
\phi\left(\frac{a}{4\eps_1}\,|t_k|\right)
+ \eps_1x^2\,\phi\left(\frac14\,a|x|\right)
\right].
\end{split}
\end{equation*}

\ms
Since the quantity in square brackets
is bounded uniformly in $k\in\mathbb{Z}$ and $x\in\R$,
Corollary \ref{corollary:y_k-*} is proved.

\end{proof}

\ms
\begin{replemma}{lemma:t_k-square}

There exists a universal constant $K_0$ such that
for all $s\in\N$,
\begin{equation*}
\sum _{0\leq k\leq s/2}
(1+t_k^2)\,p_k \,\leq\, K_0.
\end{equation*}

\end{replemma}

\begin{proof}
By the definition of $t_k$, we have
\begin{equation*}
t_k^2 = \frac{5^{3/2}}{s}\, (k-k_0)^2
\leq \frac{25}{s}\cdot
  \left[ (k-\mu(W))^2 + (\mu(W)-k_0)^2 \right]
= \frac{25}{s}\, (k-\mu(W))^2 +O(1/s).
\end{equation*}
Here we used
$(\alpha-\gamma)^2\,\leq\,2[(\alpha-\beta)^2 + (\beta-\gamma)^2]$.
Hence,

\begin{equation*}
\sum_{0\leq k\leq s/2}t_k^2\,p_k
\leq
\frac{25}{s} \,
 \sum_{0\leq k\leq s/2}
(k-\mu(W))^2p_k +O(1)
= 25\cdot \frac{\sigma^2(W)}{s}  +O(1)
= O(1)
\end{equation*}
(where, as always, $O(1)$ is independent of $s$).

\end{proof}

\ms
\begin{repcorollary}{corollary:distance-estimate}

For sequences $U,U',V,V'$ as before,

\vspace{-10pt}
\begin{equation*}
\Big|\,
  \sum_{k=m}^n U_kv_k - \sum_{k=m}^n U_k'v_k'
\,\Big|
 \,\leq
\delta_U \sum |v_k'| + \delta_V \sum |u_k|
+ \left|U_{n+1}V_n-U_mV_{m-1}\right|
+ \left|U_{n+1}V_n'-U_mV_{m-1}'\right|.
\end{equation*}

\end{repcorollary}

\begin{proof}

We start with the following four identities,
the non-trivial two of which follow from applying Lemma \ref{lemma:summation-by-parts} twice.

\begin{equation*}
\sum_{k=m}^n U_kv_k - \sum_{k=m}^n u_kV_k 
\,=\, \left[U_{n+1}V_n-U_mV_{m-1}\right].
\end{equation*}

\begin{equation*}
\sum_{k=m}^n u_kV_k - \sum_{k=m}^n u_kV_k'
\,=\, \sum_{k=m}^n u_k(V_k-V_k').
\end{equation*}

\begin{equation*}
\sum_{k=m}^n u_kV_k' - \sum_{k=m}^n U_kv_k'
\,=\, -  \left[ U_{n+1}V_n'-U_mV_{m-1}' \right].
\end{equation*}

\begin{equation*}
\sum_{k=m}^n U_kv_k' - \sum_{k=m}^n U_k'v_k'
\,=\, \sum_{k=m}^n (U_k-U_k')v_k'.
\end{equation*}

\ms
Adding up these four identities we get
\begin{equation*}
\begin{split}
&  \sum_{k=m}^n U_kv_k - \sum_{k=m}^n U_k'v_k' \\
&= \sum_{k=m}^n (U_k-U_k')v_k'
+  \sum_{k=m}^n u_k(V_k-V_k')
+  \left[ U_{n+1}V_n-U_mV_{m-1} \right]
-  \left[ U_{n+1}V_n'-U_mV_{m-1}' \right],
\end{split}
\end{equation*}
from which Corollary \ref{corollary:distance-estimate} follows.

\end{proof}

\ms
\begin{replemma}{lemma:estimating-p_k*}

For all integers $k\in\Z$,
$$
\Phi(t_k) - \Phi(t_{k-1})
\,=\, \phi(t_k)dt_k
+ \frac1{\sqrt{s}}\,O(|\phi'(t_k)|dt_k)
  + O(1/s^{3/2})
$$
where $dt_k=5^{3/4}/\sqrt{s}$.

\end{replemma}

\begin{proof}
Let $k\in\Z$. There exists a $\xi_k$ with $t_{k-1}<\xi_k<t_k$ such that

\begin{align*}
\Phi(t_k)-\Phi(t_{k-1})
   &= \phi(t_k) (t_k-t_{k-1}) - \frac12 \phi'(t_k)(t_k-t_{k-1})^2
 + \frac16\phi''(\xi_k)(t_k-t_{k-1})^3 \\
   &= \phi(t_k) dt_k - \frac12 \phi'(t_k)(dt_k)^2
 + \frac16\phi''(\xi_k)(dt_k)^3 \\
   & = \phi(t_k)dt_k
  \,+\,\frac1{\sqrt{s}}\,O(|\phi'(t_k)|dt_k)
  + O(1/s^{3/2}).
\end{align*}

\end{proof}

\ms
\begin{replemma}{lemma:total-variation}

Let $h:\R\to\R$ be a differentiable function.
Assume
$V_h=\int_{-\infty}^\infty |\,h't)\,|\,dt < \infty$.
Let $I_j=[\ell_j,r_j]$ \($j\in\Z$\) be a partition of $\R$ into intervals of lengths not exceeding
$\delta>0$, and let $\xi_j\in I_j$ be arbitrary points.
Then,
$$
\left|\,
\sum_{j\in\Z} h(\xi_j)\,|I_j|
\,-\int_{-\infty}^\infty h(t)\,dt
\,\right|
\leq V_h\,\delta\,.
$$

\end{replemma}

\begin{proof}

While the statement is known in the context of total variations of functions, we give, for completeness, a simple direct proof by applying the bounded version below on each individual interval $I_j$.

\newtheorem*{observation}{Observation}
\newtheorem*{bounded-version}{Bounded version}

\begin{observation} \label{lemma:emily}
Let $h$ be a differentiable function on a closed interval $I=[a,b]$ \($a<b$\). Then,

\begin{equation*}
|h(b)-h(a)| \leq \int_a^b |h'(t)|\,dt.
\end{equation*}
\end{observation}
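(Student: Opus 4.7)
The plan is to apply the Fundamental Theorem of Calculus together with the triangle inequality for integrals. Since $h$ is differentiable on $[a,b]$ and the applications of this observation in Lemma \ref{lemma:total-variation} all involve $h$ with continuous derivative, $h'$ is Riemann-integrable on $[a,b]$, so the FTC yields
$$h(b) - h(a) = \int_a^b h'(t)\,dt.$$
Moving the absolute value inside the integral then gives
$$|h(b) - h(a)| = \left|\int_a^b h'(t)\,dt\right| \leq \int_a^b |h'(t)|\,dt,$$
which is exactly the desired inequality.

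If one prefers a proof that does not cite the FTC as a black box, an elementary alternative is to introduce the two auxiliary functions
$$g_{\pm}(x) = \int_a^x |h'(t)|\,dt \mp \bigl(h(x) - h(a)\bigr)$$
on $[a,b]$. Each $g_{\pm}$ is continuous with derivative $g_{\pm}'(x) = |h'(x)| \mp h'(x) \geq 0$, so each is non-decreasing. Since $g_{\pm}(a) = 0$, we obtain $g_{\pm}(b) \geq 0$, i.e.\ $\pm(h(b)-h(a)) \leq \int_a^b |h'(t)|\,dt$, and combining the two inequalities gives the bound.

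There is essentially no real obstacle here; the only subtlety worth noting is the regularity hypothesis. In the intended application in Lemma \ref{lemma:total-variation}, $h$ is a smooth combination of standard normal distribution and density factors, so $h'$ is continuous on any bounded interval and the FTC applies without issue.
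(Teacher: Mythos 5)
Your primary argument---apply the Fundamental Theorem of Calculus and then move the absolute value inside the integral---is exactly the proof given in the paper, and it is correct. The alternative via the auxiliary functions $g_{\pm}$ and the remark about regularity are fine but not needed here, since in the only place the Observation is used the derivative $h'$ is continuous.
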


\bs

Indeed, by the Fundamental Theorem of Calculus,
\begin{equation*}
\big|h(b)-h(a)\big|
=  \left|\int_a^b h'(t)\,dt \right|
\leq \int_a^b |h'(t)|\,dt.
\end{equation*}

\ms
\begin{bounded-version}
Let $h$ be differentiable on a closed bounded interval $I=[a,b]$ \($a<b$\).
Let $\xi\in I$ be arbitrary. Then,

\begin{equation*}
D\,:=\,
  \left|\,
    h(\xi)\cdot(b-a) \,-\, \int_a^b h(t)\,dt
  \,\right|
\,\leq\,  (b-a) \int_a^b |h'(t)|\,dt.
\end{equation*}
\end{bounded-version}

\bs
Indeed, since $h$ is continuous on $I$,
there exists an $\eta\in I$ such that

$$
\int_a^b h(t)\,dt \,=\, h(\eta)\cdot(b-a).
$$

\ms
Assume (WLOG) that $\eta\leq\xi$.
Then, by the Observation above,
$$
D \,=\,(b-a)\cdot \big|h(\xi)-h(\eta)\big|
\,\leq\, (b-a) \int_\eta^\xi |h'(t)|\,dt
\,\leq\, (b-a) \int_a^b |h'(t)|\,dt.
$$

\end{proof}

\ms
\begin{replemma}{lemma:mixing-normals}

Let $a$ and $b$ be real numbers.
Then for all $x\in\R$,
$$
\int_{-\infty}^\infty
\Phi(ax+bt)\,\varphi(t)dt
= \Phi\left(\frac{ax}{\sqrt{1+b^2}}\right).
$$

\end{replemma}

\begin{proof}
One could compute the two-dimensional integral corresponding to the left hand side.
We present instead a simple probabilistic proof.
We write $E$ for expected value.

\ms
Let $Z_1$ and $Z_2$ be independent standard normal variables. Define $Z_3 = Z_1-bZ_2$.
Then $Z_3$ is a normal random variable
with 0 expectation and variance $1+b^2$.
We then have

\begin{equation*}
 \begin{split}
\int_{-\infty}^\infty
 \Phi(ax+bt)\,\varphi(t)dt
& = E \Phi(ax+bZ_2)
 = E P(Z_1 \leq ax+bZ_2) \\
& = E P(Z_3 \leq ax)
= \Phi\left(\frac{ax}{\sqrt{1+b^2}}\right).
 \end{split}
\end{equation*}
\end{proof}

\bibliography{cores}

\begin{thebibliography}{GKP94}

\bibitem[AHJ14]{armstrong}
Drew Armstrong, Christopher R.~H. Hanusa, and Brant~C. Jones.
\newblock {Results and conjectures on simultaneaous core partitions}.
\newblock {\em European Journal of Combinatorics}, 41:205--220, 2014.
\newblock \url{http://arxiv.org/abs/1308.0572}.

\bibitem[Amd16]{teddy}
Tewodros Amdeberhan.
\newblock Theorems, problems, and conjectures.
\newblock \url{http://dauns01.math.tulane.edu/~tamdeberhan/conjectures.pdf}
  Version 20 April, 2016.

\bibitem[And02]{anderson}
Jaclyn Anderson.
\newblock {Partitions which are simultaneously $t_1$- and $t_2$-core}.
\newblock {\em Discrete Mathematics}, 248(1-3):237--243, 2002.

\bibitem[Ben73]{bender}
Edward Bender.
\newblock {Central and local limit theorems applied to asymptotic enumeration}.
\newblock {\em Journal of Combinatorial Theory}, A15:91--111, 1973.

\bibitem[Bol84]{bolt}
Erwin Bolthausen.
\newblock An estimate of the remainder in a combinatorial central limit
  theorem.
\newblock {\em Zeitschrift f\"ur Wahrscheinlichkeitstheorie und verwandte
  Gebiete}, 66(3):379--386, 1984.

\bibitem[EZ15]{doron}
Shalosh~B Ekhad and Doron Zeilberger.
\newblock {Explicit expressions for the variance and higher moments of the size
  of a simultaneous core partition and its limiting distribution}.
\newblock Preprint, \url{http://arXiv.org/abs/1508.07637}, 2015.

\bibitem[FV09]{fv09}
Susanna Fishel and Monica Vazirani.
\newblock {A bijection between bounded dominant Shi regions and core
  partitions}.
\newblock Preprint,
  \url{http://www.math.ucdavis.edu/~vazirani/Papers/bounded14.pdf}, 2009.

\bibitem[FV10]{fv10}
Susanna Fishel and Monica Vazirani.
\newblock {A bijection between dominant Shi regions and core partitions}.
\newblock {\em European Journal of Combinatorics}, 31(8):2087--2101, 2010.

\bibitem[GKP94]{concrete}
Ronald Graham, Donald Knuth, and Oren Patashnik.
\newblock {\em Concrete Mathematics: A Foundation for Computer Science}, page
  204.
\newblock Addison-Wesley, 2 edition, 1994.

\bibitem[Har67]{harper}
Lawrence Harper.
\newblock Stirling behavior is asymptotically normal.
\newblock {\em Annals of Mathematical Statistics}, 38(2):410--414, 1967.

\bibitem[Hoe51]{hoeffding}
Wassily Hoeffding.
\newblock A combinatorial central limit theorem.
\newblock {\em Annals of Mathematical Statistics}, 22:558--566, 1951.

\bibitem[Joh15]{johnson}
Paul Johnson.
\newblock {Lattice points and simultaneous core partitions}.
\newblock Preprint, \url{http://arXiv.org/abs/1502.07934}, 2015.

\bibitem[Pit97]{pitman}
Jim Pitman.
\newblock {Probabilistic bounds on the coefficients of polynomials with only
  real zeros}.
\newblock {\em Journal of Combinatorial Theory}, A77:279--303, 1997.
\newblock Formulas (24) and (25) on page 286.

\bibitem[Sch55]{schoenberg}
Isaac~Jacob Schoenberg.
\newblock {On the zeros of the generating functions of multiply positive
  sequences and functions}.
\newblock {\em Annals of Mathematical Statistics}, 62:447􏰚--471, 1955.

\bibitem[Str16]{straub}
Armin Straub.
\newblock {Core partitions into distinct parts and an analog of Euler's
  theorem}.
\newblock {\em European Journal of Combinatorics}, 57:40--49, 2016.

\bibitem[WW44]{WW}
Abraham Wald and Jacob Wolfowitz.
\newblock Statistical tests based on permutations of observations.
\newblock {\em Annals of Mathematical Statistics}, 15:358--372, 1944.

\bibitem[Zal17]{zaleski}
Anthony Zaleski.
\newblock {Explicit expressions for the moments of the size of an
  $(s,s+1)$-core partition with distinct parts}.
\newblock {\em Advances in Applied Mathematics}, 84:1--7, 2017.

\end{thebibliography}
\bibliographystyle{alpha} 

\end{document}